\newtheorem{thm}{Theorem}
\newtheorem{cor}[thm]{Corollary}
\newtheorem{lemma}[thm]{Lemma}
\newtheorem{prop}[thm]{Proposition}
\theoremstyle{definition}
\newtheorem{defn}[thm]{Definition}
\theoremstyle{remark}
\newtheorem{remark}[thm]{Remark}
\newcommand{\bb}[1]{\mathbb{#1}}
\newcommand{\cl}[1]{\mathcal{#1}}
\begin{document}

\title[Two Characterizations of the Maximal Tensor Product]{Two Characterizations of the Maximal Tensor Product of Operator Systems}

\author[Wai Hin ~Ng]{Wai Hin~Ng}
\address{Department of Mathematics, University of Houston,
Houston, Texas 77204-3476, U.S.A.}
\email{rickula@math.uh.edu}

\date{March 23, 2015}
\keywords{maximal tensor product, operator system, schur tensor product, factorization}
\subjclass[2010]{Primary 46L06; Secondary 47L25}

\begin{abstract}
In this paper we provide two characterizations of the maximal tensor product structure for the category of operator systems introduced in \cite{KPTT1}. The first one is via the schur tensor product given in \cite{RKI}; the second one employs the idea of the CPAP in \cite{HP}. 
\end{abstract}

\maketitle


\section{Introduction}
An operator system is a self-adjoint and unital subspace of $\cl{B}(\cl{H})$ of all bounded operators on a Hilbert space $\cl{H}$. In recent years, the theory of tensor products of operator systems has been developed systematically, see e.g. \cite{Ka, KPTT1, KPTT2}. Given operator systems $\cl{S}$ and $\cl{T}$, their maximal tensor product, denoted by $\cl{S} \otimes_{\max} \cl{T}$, is equipped the smallest family of cones for which the algebra tensor product $\cl{S} \otimes \cl{T}$ forms an operator system. In the category of operator systems, the maximal tensor product is the natural analogue of the projective tensor norm of operator spaces, as well as a generalization of the maximal tensor norm of C*-algerbas. 

In fact, given any operator space $V$, there is an operator system $\cl{S}_V$ containing $V$ completely order isometrically, see \cite[Chp 8]{Pa} . In \cite{KPTT1}, it is shown that for operator spaces $V$ and $W$, the projective tensor product $V \overset{\wedge}{\otimes} W$ is completely isometrically included in $\cl{S}_V \otimes_{\max} \cl{S}_W$. Similary, any unital C*-algebras $\cl{A}$ and $\cl{B}$ are as well operator systems; in the same paper it is proved that their C*-maximal tensor product  $\cl{A} \otimes_{\text{C*-max}} \cl{B}$ is completely order isomorphic to $\cl{A} \otimes_{\max} \cl{B}$.  

Thus it is natural to characterize the maximal tensor product. In this paper we give two characterizations of the maximal tensor product from different approaches. The first approach given in section 3 is to examine the schur tensor product from \cite{RKI} in the category of operator systems. It provides a different view of the matricial cones of $\cl{S} \otimes_{\max} \cl{T}$. In section 4,  we employ factorization and the idea of the completely positive approximation property (CPAP) in \cite{HP} to characterize these matricial cones. We show that this characterization possesses connections to results on $(\min, \max)$-nuclearity found in \cite{Ka}. 

\section{Preliminaries}
We outline a few basic facts about the maximal tensor product and refer readers to \cite{FP, Ka, KPTT1, KPTT2} for the details. Given a pair of operator systems $(\cl{S}, \{P_n\}_{n=1}^{\infty}, 1_{\cl{S}} )$ and $(\cl{T}, \{Q_n \}_{n=1}^{\infty}, 1_{\cl{T}} )$, by an operator system structure on $\cl{S} \otimes \cl{T}$, we mean a family $\tau = \{ C_n \}_{n=1}^{\infty}$ of cones, where $C_n \subset M_n ( \cl{S} \otimes \cl{T} )$, satisfying:
	\begin{enumerate}
		\item[(T1)]		$(\cl{S} \otimes \cl{T}, \{ C_n \}_{n=1}^{\infty}, 1_{\cl{S}} \otimes 1_{\cl{T}} )$ is an operator system denoted by $\cl{S} \otimes_{\tau} \cl{T}$. 	
		\item[(T2)]		$P_n \otimes Q_m \subset C_{nm}$, for all $n, m \in \bb{N}$.
		\item[(T3)]		If $\phi \colon \cl{S} \to M_n$ and $\psi \colon \cl{T} \to M_m$ are unital completely positive maps, then $\phi \otimes \psi \colon \cl{S} \otimes_{\tau} \cl{T} \to M_{nm}$ is a unital completely positive map. 
	\end{enumerate}
By an operator system tensor product, we mean a mapping $\tau$ taking any pair of operator systems $\cl{S}$ and $\cl{T}$ into an operator system structure $\tau(\cl{S}, \cl{T})$, denoted by $\cl{S} \otimes_{\tau} \cl{T}$. We say $\tau$ is fuctorial, provided in addition it satisfies the following property:
	\begin{enumerate}
		\item[(T4)]		Given operator systems $\cl{S}_i$ and $\cl{T}_i$, $i = 1, 2$, if  $\phi_i \colon \cl{S}_i \to \cl{T}_i$ is unital completely positive, then $\phi_1 \otimes \phi_2 \colon \cl{S}_1 \otimes_{\tau} \cl{S}_2 \to \cl{T}_1 \otimes_{\tau} \cl{T}_2$ is unital completely positive. 
	\end{enumerate}
If for all operator systems $\cl{S}$ and $\cl{T}$, the map $\theta \colon x \otimes y \mapsto y \otimes x$ is a unital complete order isomorphism from $\cl{S} \otimes_{\tau} \cl{T}$ onto $\cl{T} \otimes_{\tau} \cl{S}$, then $\tau$ is a called symmetric.

We now recall the construction of the maximal tensor product. Given operator systems $\cl{S}$ and $\cl{T}$, we first define the family of cones
	\begin{align*}
		\cl{D}_n^{\max} ( \cl{S}, \cl{T} ) 		&= \{ A (P \otimes Q) A^* \colon P \in M_k( \cl{S} )^+, Q \in M_m( \cl{T} )^+, \\
														&	\qquad \qquad A \in M_{n, km}, k,m \in \bb{N} \}.
	\end{align*}
For short we denote $\cl{D}_n^{\max}(\cl{S}, \cl{T}) = \cl{D}_n^{\max}$. We shall remark the following useful representation of $\cl{D}_1^{\max}$.

\begin{lemma}
Every $u \in \cl{D}_1^{\max}$ can be represented as $u = \sum p_{ij} \otimes q_{ij}$ for some $[p_{ij}] \in M_n(\cl{S})^+$ and $[q_{ij}] \in M_n( \cl{T} )^+$.
\end{lemma}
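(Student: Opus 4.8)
The plan is to start from the defining representation $u = A(P\otimes Q)A^*$ with $P=[p_{st}]\in M_k(\cl{S})^+$, $Q=[q_{uv}]\in M_m(\cl{T})^+$ and $A\in M_{1,km}$, and to manufacture two positive matrices of a common size whose ``diagonal'' tensor pairing reproduces $u$. Writing $A=[a_{(s,u)}]$ with its columns indexed by the pairs $(s,u)$, the compression expands as
\[
u=\sum_{s,t=1}^{k}\sum_{u,v=1}^{m} a_{(s,u)}\,\overline{a_{(t,v)}}\;p_{st}\otimes q_{uv}.
\]
Since the target form $\sum_{I,J} p_{IJ}\otimes q_{IJ}$ demands that both index families run over the same set, I would take $n=km$ and index everything by the pairs $I=(s,u)$, $J=(t,v)$.

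The key device is to spread each original factor across the full $km$-dimensional index set using all-ones blocks. Let $J_m\in M_m$ and $J_k\in M_k$ be the all-ones scalar matrices, and set
\[
[q_{IJ}] := J_k\otimes Q,\qquad [p_{IJ}] := D\,(P\otimes J_m)\,D^*,
\]
where $D=\operatorname{diag}\big(a_{(s,u)}\big)\in M_{km}$ is the diagonal matrix formed from the entries of $A$. A direct index computation gives $(J_k\otimes Q)_{(s,u),(t,v)}=q_{uv}$ and $\big(D(P\otimes J_m)D^*\big)_{(s,u),(t,v)}=a_{(s,u)}\,\overline{a_{(t,v)}}\,p_{st}$, so that $\sum_{I,J} p_{IJ}\otimes q_{IJ}$ collapses exactly to the displayed expression for $u$, giving the required representation with $n=km$.

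For positivity I would note that $J_m=\mathbf{1}\mathbf{1}^*$ for the all-ones vector $\mathbf{1}$ of the appropriate length, whence
\[
P\otimes J_m=(I_k\otimes\mathbf{1})\,P\,(I_k\otimes\mathbf{1})^*,\qquad J_k\otimes Q=(\mathbf{1}\otimes I_m)\,Q\,(\mathbf{1}\otimes I_m)^*;
\]
each is a scalar congruence of a positive matrix and hence lies in $M_{km}(\cl{S})^+$, respectively $M_{km}(\cl{T})^+$, and the further diagonal congruence by $D$ keeps $D(P\otimes J_m)D^*$ in $M_{km}(\cl{S})^+$. (Equivalently, this positivity is the operator-valued Schur product theorem applied to $P\otimes J_m$ and the rank-one scalar matrix $\big[a_{(s,u)}\overline{a_{(t,v)}}\big]$.) The two index computations and the invariance of the cones under scalar congruence are routine; I expect the only genuine obstacle to be the combinatorial bookkeeping, namely choosing a consistent pairing of the $km$ indices so that the single compression $A(\cdot)A^*$ is reorganized into a same-size tensor pairing. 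The insight that makes this work is the use of the all-ones blocks $J_m,J_k$ to inflate each factor to the common size $n=km$ while the diagonal matrix $D$ reabsorbs the data of $A$ into the $\cl{S}$-factor.
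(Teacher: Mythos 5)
Your proposal is correct: with $I=(s,u)$, $J=(t,v)$ the index computations $(J_k\otimes Q)_{IJ}=q_{uv}$ and $\bigl(D(P\otimes J_m)D^*\bigr)_{IJ}=a_I\overline{a_J}\,p_{st}$ check out, the congruences $P\otimes J_m=(I_k\otimes\mathbf{1})P(I_k\otimes\mathbf{1})^*$ and $J_k\otimes Q=(\mathbf{1}\otimes I_m)Q(\mathbf{1}\otimes I_m)^*$ are exactly right, scalar (in particular diagonal) congruences preserve the cones $M_{km}(\cl{S})^+$ and $M_{km}(\cl{T})^+$, and the pairing $\sum_{I,J}p_{IJ}\otimes q_{IJ}$ collapses to $A(P\otimes Q)A^*=u$ as claimed. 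The underlying idea, inflating by all-ones matrices and absorbing $A$ into the positive matrix on the $\cl{S}$-side, is the same as the paper's, but the execution differs in a worthwhile way: the paper realizes $u$ as the sum of the entries of a compression of the all-$P$ block matrix (which is $J_m\otimes P$ in $M_m(M_k(\cl{S}))^+$) paired with $Q$, and then must pad the two resulting matrices with zero blocks to force them to a common size, whereas your symmetric inflation of \emph{both} factors to the common size $n=km$, with $A$ reabsorbed as the diagonal congruence $D$, makes the two index families coincide automatically and eliminates the padding step entirely. What each buys: the paper's route keeps the $\cl{T}$-side factor small (essentially $Q$ itself, up to padding), while yours trades a slightly larger $n$ for a completely explicit and mechanically verifiable computation --- a genuine gain here, since the paper's one-line ``sum of the entries'' argument is terse enough that a reader must reconstruct precisely the bookkeeping you spell out.
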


\begin{proof}
If $u = A (P \otimes Q ) A^*$ as above with $A \in M_{1, km}$, note that $u$ is then the sum of the entries of the Kronecker tensor product $(A \left[\begin{smallmatrix} P & \dots & P \\ \vdots & & \vdots \\ P & \dots & P \end{smallmatrix}\right] A^*) \otimes Q$, where the operator matrix is in $M_{m}(M_k(\cl{S}))^+$. Since we can replace $Q$ by $\left[ \begin{smallmatrix} Q & 0 \\ 0 & 0 \end{smallmatrix} \right]$ of some appropriate size and likewise for the first operator matrix, we deduce such representation as claimed. 
\end{proof}

This matricial cone structure $\{\cl{D}_{n}^{\max}\}_{n=1}^{\infty}$ is then a compatible family with matrix order unit $1_{\cl{S}} \otimes 1_{\cl{T}}$. Yet it is not Archimedean, so we complete the cones through the Archimedeanization process (see \cite{PT}) basically by taking the closure of $\cl{D}_n^{\max}$: 
	\begin{equation*}
		\cl{C}_n^{\max} (\cl{S}, \cl{T} ) = \{ U \in M_n( \cl{S} \otimes \cl{T}) \colon \varepsilon (1_{\cl{S}} \otimes 1_{\cl{T}} ) + U \in \cl{D}_n(\cl{S}, \cl{T}), \forall r > 0 \}.
	\end{equation*}
Likewise we denote $\cl{C}_n^{\max}(\cl{S}, \cl{T}) = \cl{C}_n^{\max}$. Now $\cl{S} \otimes \cl{T}$ equipped with this family $\{\cl{C}^{\max}_n \}_{n=1}^{\infty}$ satisfies Properties (T1) to (T4) and it defines a symmetric and associative operator system structure. We call it the maximal tensor product of $\cl{S}$ and $\cl{T}$ and denote it $\cl{S} \otimes_{\max} \cl{T}$. 

The maximal tensor product is projective in the following sense. Let $\tau$ be an operator system tensor product. We say that $\tau$ is left projective, provided if $q \colon \cl{S} \to \cl{R}$ is a complete quotient map (\cite{FP, Ka}), then for any operator system $\cl{T}$, the map $q \otimes id$ is a complete quotient from $\cl{S} \otimes_{\tau} \cl{T}$ onto $\cl{R} \otimes_{\tau} \cl{T}$. It is equivalent to require that for every $n \in \bb{N}$, every $u \in M_n( \cl{R} \otimes_{\tau} \cl{T} )^+$, and every $\varepsilon > 0$, there is $\tilde{u_{\varepsilon}} \in M_n( \cl{S} \otimes_{\tau} \cl{T} )^+$ so that $q \otimes id (\tilde{u_{\varepsilon}}) = u + \varepsilon (I_n \otimes 1_{\cl{R}} \otimes 1_{\cl{T}})$. Right projectivity is defined similarly and we say $\tau$ is projective if it is both left and right projective. 

This maximal tensor product has the following universal property:

\begin{thm}\cite[Theorem 5.8]{KPTT1}
Let $\cl{S}$ and $\cl{T}$ be operator systems. A bilinear map $\phi \colon \cl{S} \times \cl{T} \to \cl{B}(\cl{H} )$ is jointly completely positive if and only if its linearization $L_{\phi} \colon \cl{S} \otimes_{\max} \cl{T} \to \cl{B}(\cl{H})$ is a completely positive map. Moreover, if $\tau$ is an operator system structure on $\cl{S} \otimes \cl{T}$ satisfying this property, then $\cl{S} \otimes_{\tau} \cl{T} = \cl{S} \otimes_{\max} \cl{T}$. 
\end{thm}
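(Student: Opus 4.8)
The plan is to first settle the biconditional by unwinding the definition of joint complete positivity, and then to deduce the uniqueness clause by playing the two directions of the biconditional against each other, using the fact that every operator system sits completely order isomorphically inside some $\cl{B}(\cl{K})$.

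Recall that $\phi$ being jointly completely positive means precisely that $L_{\phi}^{(km)}(P \otimes Q) \geq 0$ in $M_{km}(\cl{B}(\cl{H}))$ for every $P = [p_{ij}] \in M_k(\cl{S})^{+}$ and $Q = [q_{rl}] \in M_m(\cl{T})^{+}$, because $L_{\phi}^{(km)}(P \otimes Q)$ is exactly the matrix $[\phi(p_{ij}, q_{rl})]$. For the direction in which $L_{\phi}$ is assumed completely positive, I would take $A = I_{km}$ in the definition of $\cl{D}_{km}^{\max}$ to see that $P \otimes Q \in \cl{D}_{km}^{\max} \subseteq \cl{C}_{km}^{\max}$, so $P \otimes Q$ is positive in $\cl{S} \otimes_{\max} \cl{T}$; complete positivity of $L_{\phi}$ then gives $L_{\phi}^{(km)}(P \otimes Q) \geq 0$, which is joint complete positivity.

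For the converse I would first treat $U \in \cl{D}_{n}^{\max}$, say $U = A(P \otimes Q)A^{*}$ with $A \in M_{n,km}$ a scalar matrix. Because $A$ has scalar entries it passes through the amplification of $L_{\phi}$, giving $L_{\phi}^{(n)}(U) = A\,L_{\phi}^{(km)}(P \otimes Q)\,A^{*}$, and this is positive since $L_{\phi}^{(km)}(P \otimes Q) \geq 0$ by hypothesis and conjugation preserves positivity. For a general $U \in \cl{C}_{n}^{\max}$ the Archimedeanization forces $U + \varepsilon(I_{n} \otimes 1_{\cl{S}} \otimes 1_{\cl{T}}) \in \cl{D}_{n}^{\max}$ for every $\varepsilon > 0$; applying the case just handled together with $L_{\phi}^{(n)}(I_{n} \otimes 1_{\cl{S}} \otimes 1_{\cl{T}}) = I_{n} \otimes \phi(1_{\cl{S}}, 1_{\cl{T}}) \geq 0$ yields $L_{\phi}^{(n)}(U) + \varepsilon\,(I_{n} \otimes \phi(1_{\cl{S}}, 1_{\cl{T}})) \geq 0$ for all $\varepsilon > 0$. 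Letting $\varepsilon \to 0^{+}$ and invoking that the positive cone of $M_{n}(\cl{B}(\cl{H}))$ is norm closed then gives $L_{\phi}^{(n)}(U) \geq 0$. This limiting argument is the one genuinely technical point of the biconditional.

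For the uniqueness clause, suppose $\tau$ is an operator system structure on $\cl{S} \otimes \cl{T}$ enjoying the same property. Embed $\cl{S} \otimes_{\tau} \cl{T}$ completely order isomorphically into $\cl{B}(\cl{K})$ by $\gamma$ and $\cl{S} \otimes_{\max} \cl{T}$ into $\cl{B}(\cl{L})$ by $\delta$. The bilinear map $(s,t) \mapsto \gamma(s \otimes t)$ has linearization $\gamma$, which is completely positive on $\cl{S} \otimes_{\tau} \cl{T}$; by the property assumed of $\tau$ it is jointly completely positive, whence by the biconditional its linearization $\gamma$ is completely positive on $\cl{S} \otimes_{\max} \cl{T}$, and since $\gamma$ is a complete order embedding of the $\tau$-structure this gives $\cl{C}_{n}^{\max} \subseteq \cl{C}_{n}^{\tau}$ for every $n$. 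Running the symmetric argument with $\delta$, the map $(s,t) \mapsto \delta(s \otimes t)$ is jointly completely positive by the biconditional, so its linearization $\delta$ is completely positive on $\cl{S} \otimes_{\tau} \cl{T}$ by the property of $\tau$, and $\delta$ being a complete order embedding of the maximal structure forces $\cl{C}_{n}^{\tau} \subseteq \cl{C}_{n}^{\max}$. The two inclusions give equality of all matricial cones, hence $\cl{S} \otimes_{\tau} \cl{T} = \cl{S} \otimes_{\max} \cl{T}$. The only real care needed here is to identify the linearization of each embedding with the embedding itself and to keep straight in which direction the complete order embedding property is being used.
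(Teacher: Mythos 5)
The paper offers no proof of this statement: it is imported verbatim from \cite[Theorem 5.8]{KPTT1}, so there is no internal argument to compare against. Your proof is correct, and it is essentially the standard one from the cited source: joint complete positivity of $\phi$ is exactly positivity of $L_{\phi}$ on the elementary positives $P \otimes Q$; scalar matrices pass through the amplifications, so positivity of $L_{\phi}^{(km)}(P\otimes Q)$ propagates to all of $\cl{D}_n^{\max}$ by conjugation; the passage from $\cl{D}_n^{\max}$ to the Archimedeanized cones $\cl{C}_n^{\max}$ is absorbed, as you say, by norm-closedness of the positive cone of $M_n(\cl{B}(\cl{H}))$, and this is indeed the one genuinely analytic step. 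Your treatment of the uniqueness clause is also the right mechanism: realize $\cl{S}\otimes_{\tau}\cl{T}$ and $\cl{S}\otimes_{\max}\cl{T}$ concretely inside $\cl{B}(\cl{K})$ and $\cl{B}(\cl{L})$ (Choi--Effros, legitimate since (T1) makes both operator systems), observe that each embedding is its own linearization, and run the two universal properties against each other to get both inclusions of matricial cones. One small point you assert without justification is $\cl{D}_{km}^{\max} \subseteq \cl{C}_{km}^{\max}$: this needs $\varepsilon(I_n \otimes 1_{\cl{S}}\otimes 1_{\cl{T}}) + U \in \cl{D}_n^{\max}$ for $U \in \cl{D}_n^{\max}$, i.e.\ closure of $\cl{D}_n^{\max}$ under addition, which follows from the usual direct-sum-and-compression trick; since the paper already records that $\{\cl{D}_n^{\max}\}_{n=1}^{\infty}$ is a compatible family of cones with matrix order unit $1_{\cl{S}}\otimes 1_{\cl{T}}$, this omission is harmless.
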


If we take $\cl{B}(\cl{H}) = \bb{C}$, we obtain the following representation of the maximal tensor product:
	\begin{equation*}
		( \cl{S} \otimes_{\max} \cl{T} )^{d, +}	= CP( \cl{S}, \cl{T}^d ),
	\end{equation*}
where the latter set is the cone of all completely positive maps from $\cl{S}$ to $\cl{T}^d$. This statement is precisely the operator system analogue of a result by Lance in \cite{La}.

The following lemma is in \cite{KPTT1} and will be used in the next section. We include the proof for completion.

\begin{lemma}
Let $\cl{S}$ and $\cl{T}$ be operator systems and $\{C_n \}_{n=1}^{\infty}$ be a compatible family of cones of $\cl{S} \otimes \cl{T}$ satisfying Property (T2). Then $\cl{D}_n^{\max} \subset C_n$. 
\end{lemma}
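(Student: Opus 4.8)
The plan is to use nothing beyond the definition of $\cl{D}_n^{\max}$ together with the two structural features of any compatible family of matrix cones: closure under addition and positive scaling inside each level $M_n(\cl{S}\otimes\cl{T})$, and closure under conjugation by rectangular scalar matrices across levels. Property (T2) supplies the ``seed'' positive elements $P\otimes Q$, and the compatibility of $\{C_n\}$ propagates positivity through the compression $A(\,\cdot\,)A^*$. There is essentially no analytic content here; the lemma is a formal consequence of the axioms.

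First I would fix an arbitrary $u \in \cl{D}_n^{\max}$ and write it, as in the definition, as $u = A(P\otimes Q)A^*$ with $P \in M_k(\cl{S})^+$, $Q \in M_m(\cl{T})^+$ and $A \in M_{n,km}$. The building block $P\otimes Q$ lives in $M_k(\cl{S})\otimes M_m(\cl{T}) \cong M_{km}(\cl{S}\otimes\cl{T})$, and since $P \in P_k$ and $Q \in Q_m$, Property (T2) gives precisely $P\otimes Q \in C_{km}$.

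Second, I would invoke the compatibility of the family $\{C_n\}_{n=1}^{\infty}$: by definition such a family is stable under compression by scalar matrices, that is, $\alpha X \alpha^* \in C_j$ whenever $X \in C_i$ and $\alpha \in M_{j,i}$. Applying this with $X = P\otimes Q \in C_{km}$ and $\alpha = A \in M_{n,km}$ yields $u = A(P\otimes Q)A^* \in C_n$. As $u$ was arbitrary, this shows $\cl{D}_n^{\max} \subseteq C_n$, which is the assertion.

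The only point I would double-check is the index bookkeeping in the identification $M_k(\cl{S})\otimes M_m(\cl{T}) \cong M_{km}(\cl{S}\otimes\cl{T})$, so that $P\otimes Q$ really is the element of $C_{km}$ named in (T2), together with the fact that compatibility is available for rectangular (not merely square) scalar matrices $A \in M_{n,km}$. Both are part of the standard definitions recalled above, so I expect no genuine obstacle: the substance of the lemma is exactly that (T2) plus compatibility already force $\cl{D}_n^{\max}$ to sit inside every admissible cone, confirming its role as the smallest such family.
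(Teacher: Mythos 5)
Your proof is correct and matches the paper's argument exactly: Property (T2) places $P\otimes Q$ in $C_{km}$, and compatibility of the family under conjugation by rectangular scalar matrices gives $A(P\otimes Q)A^* \in C_n$. The side points you flag (the identification $M_k(\cl{S})\otimes M_m(\cl{T})\cong M_{km}(\cl{S}\otimes\cl{T})$ and rectangular compatibility) are indeed part of the standard definitions, just as you expect.
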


\begin{proof}
If $P \in M_n( \cl{S} )^+$ and $Q \in M_m(\cl{T})^+$, then Property (T2) implies $P \otimes Q \in C_{nm}$. By compatibility of $\{C_n\}_{n=1}^{\infty}$, $A(P \otimes Q)A^* \in C_{k}$, for all $A \in M_{k, nm}$; hence $\cl{D}_n^{\max} \subset C_n$. 
\end{proof}


\section{The Schur Tensor Product}
In this section we examine the schur tensor product from \cite{RKI} in the category of operator systems. It turns out that in the operator system settings, the matricial cones of the schur tensor product coincide with that of the maximal tensor product, providing a different description of the maximal tensor product.  

\begin{defn}
Given operator systems $\cl{S}$ and $\cl{T}$, $X = [ x_{ij} ] \in M_n( \cl{S} )^+$, and $Y = [ y_{ij}] \in M_n( \cl{T} )^+$, we define the schur tensor product $X \circ Y$ to be 
	\begin{equation*}
			X \circ Y 	:=	[ x_{ij} \otimes y_{ij} ] \in M_n( \cl{S} \otimes \cl{T}). 
	\end{equation*}
\end{defn}

\begin{lemma}
Every $X \circ Y \in M_n( \cl{S} \otimes \cl{T} )$ can be regarded as $A( X \otimes Y ) A^*$, for some $A \in M_{n, n^2}$. 
\end{lemma}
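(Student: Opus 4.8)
The plan is to exhibit the contraction matrix $A$ explicitly and then verify the asserted identity by a direct entrywise computation. Recall that the Kronecker product $X \otimes Y$ is an element of $M_{n^2}(\cl{S} \otimes \cl{T})$, which I regard as an $n \times n$ array of $n \times n$ blocks: its entry sitting in block-row $(i,k)$ and block-column $(j,l)$ is $x_{ij} \otimes y_{kl}$. Comparing this with $X \circ Y = [x_{ij} \otimes y_{ij}]$, I observe that the Schur product is precisely the subfamily of those entries of $X \otimes Y$ occupying the positions $((i,i),(j,j))$ — that is, the entries whose two row indices coincide and whose two column indices coincide. Thus the whole statement reduces to choosing $A \in M_{n,n^2}$ so that the congruence $Z \mapsto A Z A^*$ extracts exactly this diagonal sub-pattern.

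Concretely, I would write $\{e_1, \dots, e_n\}$ for the standard basis of $\mathbb{C}^n$, identify $\mathbb{C}^{n^2}$ with $\mathbb{C}^n \otimes \mathbb{C}^n$, and set $A = \sum_{i=1}^{n} e_i (e_i \otimes e_i)^* \in M_{n, n^2}$, so that $A(e_i \otimes e_k) = \delta_{ik} e_i$ and $A^* e_j = e_j \otimes e_j$. In coordinates this reads $A_{s,(i,k)} = \delta_{si}\delta_{ik}$. The key step is then the computation of the $(s,t)$ entry of the congruence, namely
\[
\bigl(A (X \otimes Y) A^*\bigr)_{st} = \sum_{(i,k),(j,l)} \delta_{si}\delta_{ik}\,(x_{ij}\otimes y_{kl})\,\delta_{tj}\delta_{jl}.
\]
The four Kronecker deltas force $i = k = s$ and $j = l = t$, collapsing the sum to the single term $x_{st}\otimes y_{st}$. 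Hence $A(X\otimes Y)A^* = [x_{st}\otimes y_{st}] = X\circ Y$, which is the claim; one also checks that $A$ indeed has $n$ rows and $n^2$ columns, as required.

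The argument carries no real analytic or structural difficulty: the only thing demanding care is the index bookkeeping, in particular fixing once and for all the convention that identifies $M_n(\cl{S}) \otimes M_n(\cl{T})$ with $M_{n^2}(\cl{S}\otimes\cl{T})$ and tracking the double row/column indices $(i,k)$, $(j,l)$ consistently through the congruence. I expect this notational setup — rather than any step of the verification itself — to be the main point where an error could creep in, so I would state the identification explicitly before introducing $A$. It is worth noting that, combined with the definition of $\cl{D}_n^{\max}$, this lemma immediately yields $X \circ Y \in \cl{D}_n^{\max}$ whenever $X \in M_n(\cl{S})^+$ and $Y \in M_n(\cl{T})^+$, since $X \otimes Y$ is then of the form $P \otimes Q$ with $P, Q$ positive; this is the link the section is aiming to establish.
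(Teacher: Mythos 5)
Your proof is correct and is essentially the paper's own argument: your matrix $A=\sum_{i=1}^n e_i(e_i\otimes e_i)^*$ is exactly the paper's $\cl{E}=\begin{bmatrix} E_{11} & E_{22} & \dots & E_{nn}\end{bmatrix}$, written in tensor notation rather than via matrix units. If anything, your version is slightly more complete, since you carry out the general entrywise verification that the paper only illustrates for $n=2$ before asserting the general case.
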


\begin{proof}
Let $\{E_{ij}\}_{i,j=1}^n$ denote the standard matrix units of $M_{n}(\bb{C})$ and regard $X \otimes Y$ as the Kronecker tensor product. In the case when $n = 2$, note that $\begin{bmatrix} E_{11} & E_{22} \end{bmatrix}  X \otimes Y  \begin{bmatrix} E_{11} & E_{22} \end{bmatrix}^*=$
	\begin{align*}
		&		\begin{bmatrix}
					E_{11}		& 	E_{22} 
				\end{bmatrix}
				\begin{bmatrix}
					x_{11} \otimes y_{11} 	& x_{11} \otimes y_{12} & x_{12} \otimes y_{11} & x_{12} \otimes y_{12} \\
					x_{11} \otimes y_{21}	&	x_{11} \otimes y_{22}	&	x_{12} \otimes y_{21} 	&	x_{12} \otimes y_{22}	\\
					x_{21} \otimes y_{11}	&	x_{21} \otimes y_{12}	&	x_{22} \otimes y_{11} 	&	x_{22} \otimes y_{12}	\\					
					x_{21} \otimes y_{21}	&	x_{21} \otimes y_{22}	&	x_{22} \otimes y_{21} 	&	x_{22} \otimes y_{22}	\\
				\end{bmatrix}
				\begin{bmatrix}
					E_{11} 	\\	E_{22}
				\end{bmatrix}
				\\
		&\qquad \qquad \qquad	\qquad =	\begin{bmatrix}
						x_{11} \otimes y_{11}	&		x_{12} \otimes y_{12}\\
						x_{21} \otimes y_{21}	&		x_{22} \otimes y_{22}
				\end{bmatrix}
				=		X \circ Y.
	\end{align*}
In general, we may view $X \circ Y$ as a pre-and-post mulitplication of $X \otimes Y$ by a special $n \times n^2$ matrix $\cl{E} = \begin{bmatrix} E_{11} & E_{22} & \dots & E_{nn}		\end{bmatrix}$, a similar calculation shows that  $X \circ Y = \cl{E} (X \otimes Y) \cl{E}^*$. 
 \end{proof}

\begin{lemma}
Every $P \in M_n ( \cl{S} \otimes \cl{T} )$ can be written as $P = A ( X \circ  Y ) B$, for some $X \in M_k( \cl{S} )$, $Y \in M_k( \cl{T} )$, $A \in M_{n, k}$ and $B \in M_{k, n}$. In particular, we may take $B = A^*$. 
\end{lemma}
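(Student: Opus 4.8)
The plan is to reduce an arbitrary $P$ to a finite sum of ``elementary'' terms of the form $E_{ij}\otimes(s\otimes t)$, to factor each such term individually as $A(X\circ Y)A^*$, and then to assemble these factorizations into a single one by a block-diagonal direct sum. The bare factorization $P=A(X\circ Y)B$ is easy; the delicate point is the final clause $B=A^*$, and everything is arranged so as to secure it from the outset. Note that here $X,Y$ are not required to be positive, so we are free to use $\circ$ on arbitrary matrices, interpreting $X\circ Y=[x_{ij}\otimes y_{ij}]$ entrywise.

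First I would write $P=\sum_{i,j}E_{ij}\otimes P_{ij}$, where the $E_{ij}$ are the matrix units of $M_n(\bb{C})$ and $P_{ij}\in\cl{S}\otimes\cl{T}$. Since each $P_{ij}$ is a finite sum of elementary tensors, $P$ becomes a finite sum $P=\sum_{l=1}^{L}E_{i_l j_l}\otimes(s_l\otimes t_l)$ with $s_l\in\cl{S}$ and $t_l\in\cl{T}$. Thus it suffices to factor a single term $E_{ij}\otimes(s\otimes t)$ and then to glue.

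For a single term, take $X=sE_{12}\in M_2(\cl{S})$ and $Y=tE_{12}\in M_2(\cl{T})$, i.e. the $2\times2$ matrices whose only nonzero entry is $s$, resp. $t$, in the $(1,2)$ slot. Then $X\circ Y=(s\otimes t)E_{12}$ by the entrywise definition of the schur product. Setting $A=[\,e_i\ \ e_j\,]\in M_{n,2}$, where $e_i,e_j$ are the standard column vectors of $\bb{C}^n$, a direct computation of the $(p,q)$ entry gives $\bigl(A(X\circ Y)A^*\bigr)_{pq}=(e_i)_p\,\overline{(e_j)_q}\,(s\otimes t)=\delta_{pi}\delta_{qj}(s\otimes t)$, so that $A(X\circ Y)A^*=E_{ij}\otimes(s\otimes t)$. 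Placing the datum in the \emph{off-diagonal} slot $(1,2)$ is exactly what allows a single matrix $A$, conjugated on both sides, to reproduce an off-diagonal matrix unit $E_{ij}$ with $i\neq j$; this is the step that makes $B=A^*$ possible, and it is the main obstacle, since the naive diagonal placement would force $B\neq A^*$ in general.

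Finally I would glue the $L$ factorizations by direct sum. Writing $X_l,Y_l,A_l$ for the data produced above for the $l$-th term, set $X=X_1\oplus\cdots\oplus X_L\in M_{2L}(\cl{S})$ and $Y=Y_1\oplus\cdots\oplus Y_L\in M_{2L}(\cl{T})$. Because $\circ$ is computed entrywise and the off-diagonal blocks of a block-diagonal matrix vanish, one has $X\circ Y=(X_1\circ Y_1)\oplus\cdots\oplus(X_L\circ Y_L)$. Putting $A=[\,A_1\ \cdots\ A_L\,]\in M_{n,2L}$ and using the standard identity $A\bigl(\bigoplus_l D_l\bigr)A^*=\sum_l A_l D_l A_l^*$ for block-diagonal $D=\bigoplus_l D_l$, I obtain $A(X\circ Y)A^*=\sum_{l=1}^{L}A_l(X_l\circ Y_l)A_l^*=\sum_{l=1}^{L}E_{i_l j_l}\otimes(s_l\otimes t_l)=P$, which is the desired factorization with $k=2L$ and $B=A^*$. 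The only point requiring care is the bookkeeping in these two block computations, both of which are routine once the single-term case and the behavior of $\circ$ on direct sums are in hand.
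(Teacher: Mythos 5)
Your proof is correct, and it follows the same overall template as the paper's---decompose $P$ into a finite sum, realize each summand as a compression of a schur product, and glue via a block direct sum compressed by a row matrix $A$, so that $B=A^*$ falls out of the identity $A\bigl(\bigoplus_l D_l\bigr)A^*=\sum_l A_l D_l A_l^*$---but your atomic decomposition is genuinely finer, and that changes the key step. The paper stops one level earlier: it writes $P=\sum_{l=1}^m U^l$ where each $U^l=[x_{ij}^l\otimes y_{ij}^l]$ is a full $n\times n$ matrix \emph{all} of whose entries are elementary tensors (pad the entrywise expansions $P_{ij}=\sum_l x_{ij}^l\otimes y_{ij}^l$ with zero tensors so every entry uses the same number $m$ of terms). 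Such a $U^l$ is already a schur product $[x_{ij}^l]\circ[y_{ij}^l]$ with no positioning trick needed; the block-diagonal sum $U^1\oplus\cdots\oplus U^m$ is again a schur product (fill the off-diagonal blocks of $X$ and $Y$ with zeros, since $0\otimes y=0$), and $A=[\,I_n\ \cdots\ I_n\,]$ recovers $P$ with $k=nm$. By atomizing all the way down to $E_{ij}\otimes(s\otimes t)$ you create the very difficulty---a rank-one compression $v(s\otimes t)v^*$ can only produce diagonal matrix units---that your off-diagonal $E_{12}$ placement then solves; your observation that this placement is exactly what rescues $B=A^*$ at the matrix-unit level is correct and neat, but the paper never confronts it. What your route buys is a fully explicit, self-contained single-term factorization and no padding bookkeeping; what the paper's buys is brevity and simpler data (the $U^l$ taken as given, $A$ a row of identity blocks).
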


\begin{proof}
Write $P$ as a sum of matrices whose entries are elementary tensors, that is, $P = \sum_{ l = 1}^m U^l $, where $U^l = [ x_{ij}^l \otimes y_{ij}^l ] \in M_n( \cl{S} \otimes \cl{T} )$. Let $U = U^1 \oplus \dots \oplus U^m$, so 
	\begin{equation*}
			U		=		\begin{bmatrix}
								U^1		&		0		&	\dots &		0		\\
								0			&		U^2	&	\dots 	&		0	\\
								\vdots	&		\vdots & \ddots &	\vdots	\\
								0			&		0		&		\dots 	&	U^m
							\end{bmatrix}
	\end{equation*}
which is $X \circ Y$ for some $X \in M_{nm}( \cl{S} )$ and $Y \in M_{nm}( \cl{T} )$. Now let $A = \begin{bmatrix}	I_n  & I_n & \dots & I_n		\end{bmatrix} \in M_{n, nm}$ with $m$ copies of $I_n$. Then,
	\begin{align*}
		(A U) A^*		&=	\begin{bmatrix}		U^1 & U^2 & \dots & U^l	\end{bmatrix}_{n \times nm} \begin{bmatrix}	 I_n \\ I_n \\ \vdots \\ I_n		\end{bmatrix}_{nm \times n}		=	\sum_{l=1}^m U^l	= P. 
	\end{align*}
\end{proof}

Lemma 5 shows that schur tensor product is in fact of the form of elements in $\cl{D}_n^{\max}$, except positivity. Motivated by Lemma 6 and the construction of the maximal tensor product, we define the following family of cones. 

\begin{defn}\label{schurCone}
Given operator systems $\cl{S}$ and $\cl{T}$, we define
	\begin{align*}
		\cl{C}_n^s (\cl{S} \otimes \cl{T} ) 	&:=	\{	A (X \circ Y ) A^* 	\in M_n( \cl{S} \otimes \cl{T} )	\colon 			\nonumber	\\		
																	&\qquad \qquad X \in M_k(\cl{S})^+, Y \in M_k(\cl{T})^+, A \in M_{n, k}	, k \in \bb{N}		\}.
	\end{align*}
For short we denote $\cl{C}_n^s (\cl{S} \otimes \cl{T}) = \cl{C}_n^s$. 
\end{defn}

\begin{prop}
The family $\{ \cl{C}_n^s \}_{n=1}^{\infty}$ defines a matrix ordering on $\cl{S} \otimes \cl{T}$ with matrix order unit $1 \otimes 1$. 
\end{prop}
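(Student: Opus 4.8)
The plan is to verify the three defining properties of a matrix ordering — that each $\cl{C}_n^s$ is a cone of self-adjoint elements, that the family is compatible under scalar congruences, and that $1_\cl{S} \otimes 1_\cl{T}$ is a matrix order unit — extracting the order-unit property from the corresponding fact about $\{\cl{D}_n^{\max}\}$ recorded in the preliminaries.

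First I would check the algebraic cone axioms directly from Definition \ref{schurCone}. Self-adjointness is immediate: if $X = [x_{ij}] \in M_k(\cl{S})^+$ and $Y = [y_{ij}] \in M_k(\cl{T})^+$, then $x_{ji}^* = x_{ij}$ and $y_{ji}^* = y_{ij}$, so $(X \circ Y)^* = X \circ Y$ and hence $A(X \circ Y)A^*$ is self-adjoint. Closure under positive scalars is trivial, since $\lambda A(X \circ Y)A^* = (\sqrt{\lambda}A)(X \circ Y)(\sqrt{\lambda}A)^*$. For closure under sums, given $A_i(X_i \circ Y_i)A_i^*$ with $X_i \in M_{k_i}(\cl{S})^+$ and $Y_i \in M_{k_i}(\cl{T})^+$, I would pass to the direct sums $X = X_1 \oplus X_2$ and $Y = Y_1 \oplus Y_2$; because the off-diagonal blocks vanish one has $X \circ Y = (X_1 \circ Y_1) \oplus (X_2 \circ Y_2)$, and then $A = [A_1 \ A_2]$ yields $A(X \circ Y)A^* = A_1(X_1 \circ Y_1)A_1^* + A_2(X_2 \circ Y_2)A_2^*$. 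Compatibility is built into the definition: for a scalar matrix $\beta$ one has $\beta A(X \circ Y)A^* \beta^* = (\beta A)(X \circ Y)(\beta A)^*$, so scalar congruences carry $\cl{C}_n^s$ into the appropriate $\cl{C}_m^s$. Taking the diagonal matrices $X = I_n \otimes 1_\cl{S}$, $Y = I_n \otimes 1_\cl{T}$ and $A = I_n$ shows $I_n \otimes 1_\cl{S} \otimes 1_\cl{T} \in \cl{C}_n^s$, so the unit lies at every level of the family.

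The substance is the order-unit property, and the key step is the inclusion $\cl{D}_n^{\max} \subseteq \cl{C}_n^s$. I would establish it by realizing the Kronecker product $P \otimes Q$, with $P \in M_k(\cl{S})^+$ and $Q \in M_m(\cl{T})^+$, as a genuine schur product of two positive matrices of the common size $km$. Writing $J_k$ for the $k \times k$ matrix of all ones (a positive rank-one scalar matrix), I set $\tilde{P} = P \otimes J_m \in M_{km}(\cl{S})^+$ and $\tilde{Q} = J_k \otimes Q \in M_{km}(\cl{T})^+$; each is positive because it is a scalar congruence of $P$, resp. $Q$. Using the double index $(i,a),(j,b)$, a direct computation gives $(\tilde{P} \circ \tilde{Q})_{(i,a),(j,b)} = p_{ij} \otimes q_{ab} = (P \otimes Q)_{(i,a),(j,b)}$, that is $\tilde{P} \circ \tilde{Q} = P \otimes Q$. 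Consequently $A(P \otimes Q)A^* = A(\tilde{P} \circ \tilde{Q})A^* \in \cl{C}_n^s$ for every $A \in M_{n,km}$, which proves $\cl{D}_n^{\max} \subseteq \cl{C}_n^s$. Since the preliminaries record that $1_\cl{S} \otimes 1_\cl{T}$ is a matrix order unit for $\{\cl{D}_n^{\max}\}$, for any self-adjoint $u \in M_n(\cl{S} \otimes \cl{T})$ there is $r > 0$ with $r(I_n \otimes 1_\cl{S} \otimes 1_\cl{T}) + u \in \cl{D}_n^{\max} \subseteq \cl{C}_n^s$, so $1_\cl{S} \otimes 1_\cl{T}$ is a matrix order unit for $\{\cl{C}_n^s\}$ as well. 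If properness of the cones is demanded in the definition of matrix ordering, it follows from the reverse containment $\cl{C}_n^s \subseteq \cl{D}_n^{\max}$ supplied by Lemma 5 together with the corresponding property of $\{\cl{D}_n^{\max}\}$, since then $\cl{C}_n^s \cap (-\cl{C}_n^s) \subseteq \cl{D}_n^{\max} \cap (-\cl{D}_n^{\max})$.

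I expect the main obstacle to be this order-unit step, and specifically the observation that an arbitrary Kronecker product $P \otimes Q$ of positives of different sizes can be recovered as a schur product after inflating by the all-ones matrices $J_m$ and $J_k$. Once this identity is in hand, both the order-unit property and properness reduce to the already-known behaviour of $\{\cl{D}_n^{\max}\}$, and the remaining cone and compatibility axioms are routine verifications.
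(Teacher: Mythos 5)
Your proof is correct, and on the one genuinely substantive step it takes a different --- and in fact more careful --- route than the paper. The cone axioms, compatibility, and properness are handled exactly as in the paper: direct sums $X_1 \oplus X_2$, $Y_1 \oplus Y_2$ with the row matrix $\begin{bmatrix} A & B \end{bmatrix}$ for additivity, $\sqrt{t}A$ for positive scaling, absorbing scalar congruences into $A$, and properness from the containment $\cl{C}_n^s \subseteq \cl{D}_n^{\max}$ given by Lemma 5's identity $X \circ Y = \cl{E}(X \otimes Y)\cl{E}^*$. The divergence is in the order-unit step. The paper asserts that the order-unit property of $\{\cl{C}_n^s\}$ ``follows from the inclusion $\cl{C}_n^s \subset \cl{D}_n^{\max}$,'' but as written that inclusion points the wrong way: knowing $r(1\otimes 1) + u \in \cl{D}_n^{\max}$ says nothing about membership in $\cl{C}_n^s$; what is needed is $\cl{D}_n^{\max} \subseteq \cl{C}_n^s$, which the paper only establishes afterwards (Lemma 9 giving Property (T2), combined with Lemma 3, as noted in Remark 10). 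You supply exactly this missing inclusion inside the proof via the identity $P \otimes Q = (P \otimes J_m) \circ (J_k \otimes Q)$, verified with double indices $(i,a),(j,b)$, with positivity of each factor from the scalar congruences $P \otimes J_m = (I_k \otimes v)P(I_k \otimes v)^*$ and $J_k \otimes Q = (w \otimes I_m)Q(w \otimes I_m)^*$. This is precisely the content of the paper's Lemma 9, but your formulation is cleaner: by pairing $P \otimes J_m$ with $J_k \otimes Q$ in the same index ordering you avoid the ``canonical shuffle'' unitary-equivalence argument the paper needs for its version $(X \otimes J_m)\circ(Y \otimes J_n)$. The trade-off is organizational: the paper defers the inclusion $\cl{D}_n^{\max} \subseteq \cl{C}_n^s$ to a separate (T2) lemma so it can be reused for Theorem 11, at the cost of a proof of the proposition that is not self-contained as written; your version makes the proposition logically watertight on its own, and your inline inflation identity immediately yields Property (T2), hence $\cl{C}_n^s = \cl{D}_n^{\max}$, as a byproduct.
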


\begin{proof}
We first check that $\cl{C}_n^s$ is a cone of $M_n( \cl{S} \otimes \cl{T} )$. It is obvious from definition that $\cl{C}_n^s \subset M_n( \cl{S} \otimes \cl{T} )_{sa}$. Let $A ( X_1 \circ Y_1 ) A^*$ and $B ( X_2 \circ Y_2 ) B^*$ be in $\cl{C}_n^s$, where $X_1 \in M_k( \cl{S} )$,  $Y_1 \in M_k( \cl{T} )$, $X_2 \in M_m( \cl{S})$, $Y_2 \in M_m( \cl{T})$, $A \in M_{n, k}(\bb{C})$, and $B \in M_{n, m}(\bb{C})$. Let
	\begin{equation*}
			X 		=	 X_1 \oplus X_2 = 	\begin{bmatrix}		X_1 	&	0	\\	0	&	X_2		\end{bmatrix} \in M_{k+m}( \cl{S} )^+,
	\end{equation*}
and likewise	$Y = Y_1 \oplus Y_2 \in M_{k +m} (\cl{T})^+$. Consider $\begin{bmatrix}	A & B \end{bmatrix} \in M_{n, k+m}$, then 
	\begin{align*}
			\begin{bmatrix}	A & B \end{bmatrix}  ( X  \circ Y ) \begin{bmatrix}	A & B \end{bmatrix} ^*	
					&=	\begin{bmatrix}	A & B \end{bmatrix}  \begin{bmatrix}
									X_1 \circ Y_1		&		0		\\
									0		&		X_2 \circ Y_2
							\end{bmatrix}
							\begin{bmatrix}	A & B \end{bmatrix}^*	\\
					&=	A (X_1 \circ Y_1) A^* + B( X_2 \circ Y_2 ) B^*
	\end{align*}
is in $\cl{C}_n^s$. If $t > 0$, then $t ( A ( X \circ Y ) A^* ) = ( \sqrt{t} A) ( X \circ Y ) ( \sqrt{t} A)^* \in \cl{C}_n^s$. Also, if $B \in M_{r, n}$ then $(BA)( X \circ Y ) (B A)^* \in \cl{C}_r^s$. Therefore, $\{ \cl{C}_n^s \}_{n=1}^{\infty}$ is a compatible family of cones on $\cl{S} \otimes \cl{T} )$.

Finally, to see that they are proper, we claim that in fact $\cl{C}_n^{s} \subset \cl{D}_n^{\max}$. Indeed, let $A( X \circ Y ) A^* \in \cl{C}_n^s$, for some $X \in M_k(\cl{S})^+$, $Y \in M_k( \cl{T} )^+$, and $A \in M_{n, k}$. Then by the previous lemma,
	\begin{align*}
		A (X \circ Y ) A^* 		&=		A( \cl{E} ( X \otimes Y) \cl{E}^* ) A^* 	=	( A \cl{E} )( X \otimes Y ) ( A \cl{E} )^*,
	\end{align*}
which is in $\cl{D}_n^{\max}(\cl{S}, \cl{T})$ by definition. Since the latter cone is proper, $- \cl{C}_n^s \cap \cl{C}_n^s = \{ 0 \}$. The fact that $1 \otimes 1$ is a matrix order unit with respect to $\{ \cl{C}_n^s \}$ follows from the inclusion $\cl{C}_n^s \subset \cl{D}_n^{\max}$ and that $1 \otimes 1$ is a matrix order unit with respect to $\cl{D}_{n}^{\max}$. Consequently, $\{ \cl{C}_n^s \}_{n=1}^{\infty}$ defines a matrix ordering on $\cl{S} \otimes \cl{T}$.
\end{proof}

From the last paragraph of the proof, we see that $\cl{C}_n^s \subset \cl{D}_n^{\max}$. In fact, one can further deduce that $\cl{C}_n^s  = \cl{D}_n^{\max}$ by Lemma 3, after proving that this family satisfies Property (T2). 

\begin{lemma}
The family $\{ \cl{C}_n^s \}_{n=1}^{\infty}$ satisfies Property (T2). That is, given $X \in M_n( \cl{S} )^+$ and $Y \in M_m( \cl{T} )^+$, $X \otimes Y \in \cl{C}_{nm}^s$. 
\end{lemma}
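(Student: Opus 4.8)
The plan is to exhibit $X \otimes Y$ directly as a single schur product of positive matrices, i.e. to write it as $A(P \circ Q)A^*$ with $A = I_{nm}$ the identity, for suitable $P \in M_{nm}(\cl{S})^+$ and $Q \in M_{nm}(\cl{T})^+$. The guiding idea is to ``inflate'' $X$ and $Y$ by all-ones matrices so that the schur product, which only sees the index-matched terms $p_{ij} \otimes q_{ij}$, reproduces every cross term $x_{ij} \otimes y_{ab}$ of the Kronecker product.

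Concretely, write $X = [x_{ij}]_{i,j=1}^n$ and $Y = [y_{ab}]_{a,b=1}^m$, and index the rows and columns of an $nm \times nm$ matrix by pairs $(i,a)$. Let $J_m \in M_m(\bb{C})$ and $J_n \in M_n(\bb{C})$ denote the all-ones matrices. I would set
	\[
		P = X \otimes J_m, \qquad Q = J_n \otimes Y,
	\]
so that, with the pairing above, $P_{(i,a),(j,b)} = x_{ij}$ and $Q_{(i,a),(j,b)} = y_{ab}$. Positivity is the first thing to check: since $J_\ell$ is the rank-one scalar matrix $\mathbf{1}\mathbf{1}^*$ (with $\mathbf{1}$ the all-ones vector) it is positive, and the Kronecker product of positive matrices is positive, so $P \in M_{nm}(\cl{S})^+$ and $Q \in M_{nm}(\cl{T})^+$.

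The main computation is then an entry-wise comparison. By the definition of the schur product,
	\[
		(P \circ Q)_{(i,a),(j,b)} = P_{(i,a),(j,b)} \otimes Q_{(i,a),(j,b)} = x_{ij} \otimes y_{ab},
	\]
which is exactly the $((i,a),(j,b))$ entry of the Kronecker product $X \otimes Y$. Hence $P \circ Q = X \otimes Y$, and taking $A = I_{nm}$ gives $X \otimes Y = I_{nm}(P \circ Q)I_{nm}^* \in \cl{C}_{nm}^s$, as required. Note no Archimedeanization is needed, since we land in $\cl{C}_{nm}^s$ on the nose.

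The only delicate point — and the main obstacle — is the index bookkeeping: one must fix the pairing $(i,a)$ consistently so that the leg orderings in $X \otimes J_m$ and $J_n \otimes Y$ line up entry-for-entry with those of $X \otimes Y$. Once this convention is pinned down, each all-ones factor contributes $1$ in its slot and the schur product collapses precisely onto the Kronecker product, so the computation goes through mechanically.
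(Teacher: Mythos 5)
Your proof is correct and is essentially the paper's own argument: both exhibit $X \otimes Y$ as the schur product of positive ``inflations'' of $X$ and $Y$ by all-ones matrices, with positivity of the factors coming from positivity of $J_\ell$. The only cosmetic difference is that by taking $Q = J_n \otimes Y$ (rather than $Y \otimes J_n$ as written in the paper) you keep the Kronecker index pairing aligned from the start, so your entrywise computation replaces the paper's appeal to the canonical shuffle.
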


\begin{proof}
Let $X$ and $Y$ be as above, note that we may view 
	\begin{align*}
		X \otimes Y 	&=	[  x_{ij}  \otimes Y ]_{i,j=1}^n		\\
								&=	\begin{bmatrix}	x_{11} \otimes J_m		&	\dots & 	x_{n1} \otimes J_m	\\	\vdots	& 	\ddots & \vdots	\\ x_{n1} \otimes J_m & \dots & x_{nn} \otimes J_m		\end{bmatrix}_{nm \times nm}	\circ 	\begin{bmatrix}	Y & \dots & Y	\\ \vdots & \ddots & \vdots \\ Y 	& \dots & Y	\end{bmatrix}_{nm \times nm},
	\end{align*}
where $J_k \in M_k( \bb{C} )$ is the matrix of entries all 1. It is easy to see that the second matrix in the above equation is $Y \otimes J_n$. A straight-forward calculation shows that for each $k \in \bb{N}$, $J_k$ has eigenvalues $0$ and $k$, so $Y \otimes J_n \in M_{nm}( \cl{T} )^+$. On the other hand, after the ``canonical shuffle'' \cite[Chp 3]{Pa}, the first matrix is unitarily equivalent to $X \otimes J_m$, which is also positive in $M_{nm} (\cl{S})$. Therefore, $X \otimes Y = ( X \otimes J_m ) \circ ( Y \otimes J_n ) \in \cl{C}_{nm}^s$ and the family $\{ \cl{C}_n^s \}_{n=1}^{\infty}$ satisfies Property (T2).
\end{proof}

\begin{remark}
Now by Lemma 3 we have the reverse inclusion $\cl{D}_n^{\max} \subset \cl{C}_n^s $, so the two families of cones are the same. In particular, Lemma 1 follows easily: every $u \in \cl{D}_1^{\max} = \cl{C}_1^s$ can be represented as $u = A (P \circ Q) A^* = (A^* P A) \circ Q$, for some $A \in M_{1, n}$, $P \in M_n(\cl{S})^+$, and $Q \in M_n(\cl{T})^+$.  If we archimedeanize the cones $\{ \cl{C}_n^s \}_{n=1}^{\infty}$, then we obtain the schur tensor product of operator systems and denote it $\cl{S} \otimes_s \cl{T}$; and it is unitally completely order isomorhpic to $\cl{S} \otimes_{\max} \cl{T}$. 
\end{remark}

\begin{thm}
The cones $\cl{C}_n^{s} = \cl{D}_n^{\max}$, for every $n \in \bb{N}$. Consequently, for operator systems, the schur tensor product is the maximal tensor product, i.e. $\cl{S} \otimes_s \cl{T} = \cl{S} \otimes_{\max} \cl{T}$.
\end{thm}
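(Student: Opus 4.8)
The plan is to obtain the cone equality $\cl{C}_n^s = \cl{D}_n^{\max}$ by assembling the two inclusions already available from the preceding results, and then to deduce the statement about the tensor products by passing to the Archimedeanization.

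First I would recall the inclusion $\cl{C}_n^s \subset \cl{D}_n^{\max}$, which was established in the final paragraph of Proposition 8: writing an arbitrary generator $A(X \circ Y)A^*$ of $\cl{C}_n^s$ as $(A\cl{E})(X \otimes Y)(A\cl{E})^*$ by means of Lemma 5 exhibits it directly as an element of $\cl{D}_n^{\max}$. For the reverse inclusion I would invoke Lemma 3. By Proposition 8 the family $\{\cl{C}_n^s\}_{n=1}^{\infty}$ is a compatible family of cones on $\cl{S} \otimes \cl{T}$, and by Lemma 9 it satisfies Property (T2); thus Lemma 3 applies verbatim with $C_n = \cl{C}_n^s$ and yields $\cl{D}_n^{\max} \subset \cl{C}_n^s$ for every $n$. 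Combining the two inclusions gives $\cl{C}_n^s = \cl{D}_n^{\max}$.

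I expect that no new difficulty arises at this stage: the substantive work has already been carried out in Lemma 9, where one must realize $X \otimes Y$ as a genuine schur product of two positive matrices. The delicate points there are the positivity of $Y \otimes J_n$ (via the eigenvalue computation for $J_k$) and the unitary equivalence of the first factor to $X \otimes J_m$ through the canonical shuffle. Granting Lemma 9, the equality of cones is essentially formal, so the main obstacle in the overall argument is really the verification of (T2) rather than anything in the theorem proper.

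Finally, for the ``consequently'' clause, I would observe that $\cl{S} \otimes_s \cl{T}$ and $\cl{S} \otimes_{\max} \cl{T}$ are, by their definitions (Remark 10 and Section 2 respectively), the Archimedeanizations of the compatible families $\{\cl{C}_n^s\}$ and $\{\cl{D}_n^{\max}\}$, each taken with the same order unit $1 \otimes 1$. Since these two families of cones have now been shown to coincide, the Archimedeanization process produces identical completed cones $\cl{C}_n^{\max}$. Hence the identity map on $\cl{S} \otimes \cl{T}$ is a unital complete order isomorphism from $\cl{S} \otimes_s \cl{T}$ onto $\cl{S} \otimes_{\max} \cl{T}$, which is the desired conclusion.
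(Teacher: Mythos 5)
Your proof is correct and follows exactly the route the paper itself takes: the inclusion $\cl{C}_n^s \subset \cl{D}_n^{\max}$ from the last paragraph of Proposition 8 (via Lemma 5), the reverse inclusion from Lemma 3 once Lemma 9 supplies Property (T2), and the Archimedeanization of the now-identical cone families to identify $\cl{S} \otimes_s \cl{T}$ with $\cl{S} \otimes_{\max} \cl{T}$ as in Remark 10. Nothing to add.
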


Given operator systems $\cl{S}$ and $\cl{T}$,  $\cl{S} \otimes_{\max} \cl{T}$ when viewed as an operator space, possesses a natural operator space matrix norm $|| \cdot ||_{\text{osy-max}}$; that is, given $U \in M_n( \cl{S} \otimes_{\max} \cl{T} )$, 
	\begin{equation*}
		|| U ||_{\text{osy-max}} 	= 	\inf \left\{	r \colon \begin{bmatrix}	 r I & U \\ U^* & r I	\end{bmatrix} \in M_{2n} ( \cl{S} \otimes_{\max} \cl{T} )^+	 \right\}.
	\end{equation*}

In particular since $\cl{A} \otimes_{\text{C*-max}} \cl{B} = \cl{A} \otimes_{\max} \cl{B}$ for C*-algebras, the C*- maximal tensor norm $|| \cdot ||_{\text{C*-max}}$ is precisely $|| \cdot ||_{\text{osy-max}}$ for C*-algebras. The following proposition is a slightly generalized version of $|| \cdot ||_{\text{C*-max}} \leq || \cdot ||_s$ in \cite{RKI}. 

\begin{prop}\label{schurcontraction}
Let $\cl{S}$ and $\cl{T}$ be operator systems. Then the identity map $\phi \colon \cl{S} \otimes^s \cl{T} \to \cl{S} \otimes_{\max} \cl{T}$ is a complete contraction.
\end{prop}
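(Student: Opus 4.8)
The plan is to recast the assertion that $\phi$ is a complete contraction as the single operator-space norm estimate
\[
\|U\|_{\text{osy-max}} \;\le\; \|U\|_s, \qquad U \in M_n(\cl{S}\otimes\cl{T}),
\]
holding uniformly over all matrix levels $n$. Recall from \cite{RKI} that the schur norm is computed through exactly the factorizations supplied by Lemma 6, namely $\|U\|_s = \inf\{\|A\|\,\|X\|\,\|Y\|\,\|B\|\}$ over all $U = A(X\circ Y)B$ with $X\in M_k(\cl{S})$, $Y\in M_k(\cl{T})$, $A\in M_{n,k}$, $B\in M_{k,n}$. By homogeneity of both norms it suffices to fix one such factorization with $\|A\|,\|X\|,\|Y\|,\|B\|\le 1$ and to prove that then $\|U\|_{\text{osy-max}}\le 1$, i.e. $\begin{bmatrix} I_n & U \\ U^* & I_n\end{bmatrix}\in M_{2n}(\cl{S}\otimes_{\max}\cl{T})^+$.

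The heart of the argument is to manufacture this positive $2\times 2$ block out of a schur product of \emph{positive} elements, so that Theorem 9 applies. Since $\|X\|\le 1$ in the operator space $M_k(\cl{S})$ is by definition equivalent to $\hat X := \begin{bmatrix} I_k & X \\ X^* & I_k\end{bmatrix}\in M_{2k}(\cl{S})^+$, and likewise $\hat Y := \begin{bmatrix} I_k & Y \\ Y^* & I_k\end{bmatrix}\in M_{2k}(\cl{T})^+$, I would form the schur product $\hat X \circ \hat Y$. Tracking the four $k\times k$ blocks entrywise yields
\[
\hat X \circ \hat Y = \begin{bmatrix} I_k\,(1\otimes 1) & X\circ Y \\ (X\circ Y)^* & I_k\,(1\otimes 1)\end{bmatrix}.
\]
Because $\hat X$ and $\hat Y$ are positive, this element lies in $\cl{C}_{2k}^{s} = \cl{D}_{2k}^{\max}\subseteq \cl{C}_{2k}^{\max}$ by Theorem 9, hence is positive in $\cl{S}\otimes_{\max}\cl{T}$; in particular $\|X\circ Y\|_{\text{osy-max}}\le 1$.

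It then remains to absorb $A$ and $B$. Conjugating the positive block by the scalar matrix $\mathrm{diag}(A, B^*)\in M_{2n,2k}$, and using that such congruences preserve the matrix cones of the operator system $\cl{S}\otimes_{\max}\cl{T}$, gives
\[
\begin{bmatrix} A & 0 \\ 0 & B^*\end{bmatrix}(\hat X \circ \hat Y)\begin{bmatrix} A^* & 0 \\ 0 & B\end{bmatrix} = \begin{bmatrix} AA^* & U \\ U^* & B^*B\end{bmatrix}\in M_{2n}(\cl{S}\otimes_{\max}\cl{T})^+.
\]
Since $\|A\|,\|B\|\le 1$ force $AA^*\le I_n$ and $B^*B\le I_n$, adding the positive diagonal correction $\mathrm{diag}(I_n-AA^*,\,I_n-B^*B)\,(1\otimes 1)$ produces $\begin{bmatrix} I_n & U \\ U^* & I_n\end{bmatrix}\ge 0$, whence $\|U\|_{\text{osy-max}}\le 1$. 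Taking the infimum over factorizations gives the norm inequality at level $n$, and since $n$ was arbitrary the identity map is completely contractive.

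I expect the main obstacle to be the block bookkeeping in the computation of $\hat X\circ \hat Y$: one must verify that the entrywise schur product carries the two diagonal identity blocks to $I_k(1\otimes 1)$ and the two off-diagonal blocks precisely to $X\circ Y$ and its adjoint, which is what identifies this positive element as the norm-witness for $X\circ Y$. The conceptual crux underneath is that the operator space norm of an operator system is encoded by positivity of the $2\times 2$ dilation, so the implication $\|X\|\le 1 \Rightarrow \hat X \ge 0$ is exactly the bridge that lets the order identity $\cl{C}^s = \cl{D}^{\max}$ of Theorem 9 govern the norm. The absorption of $A,B$ and the reduction to the Archimedean max cone ($\cl{D}^{\max}\subseteq\cl{C}^{\max}$) are then routine.
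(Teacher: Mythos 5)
Your proposal is correct and follows essentially the same route as the paper's proof: both extract a factorization $U = A(X\circ Y)B$ with contractive data from $\|U\|_s<1$, use the standard $2\times 2$ dilation trick to get the positive blocks $\begin{bmatrix} I & X \\ X^* & I\end{bmatrix}$ and $\begin{bmatrix} I & Y \\ Y^* & I\end{bmatrix}$, conjugate their schur product by $\mathrm{diag}(A,B^*)$, and add the positive correction $\mathrm{diag}(I-AA^*,\,I-B^*B)(1\otimes 1)$ to witness $\|U\|_{\text{osy-max}}\le 1$. Your block computation of $\hat X\circ\hat Y$ and the appeal to $\cl{C}^s_{2k}=\cl{D}^{\max}_{2k}$ are exactly the paper's argument, merely spelled out in a slightly different order.
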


\begin{proof}
Let $|| U ||_s < 1$, then by scaling, there exist scalar contractions $A, B$ and $X \in M_n(S)$ and $Y\in M_n(T)$, $|| X ||, || Y || \leq 1$ such that $U = A(X \circ Y)B$.
Hence, the matrices $P = \begin{bmatrix} I & X\\X^* & I \end{bmatrix} \in M_{2n}(S)^+$ and $Q = \begin{bmatrix} I & Y\\Y^* & I \end{bmatrix} \in M_{2n}(T)^+$. Note that
	\begin{align*}
		\begin{bmatrix} A & 0 \\ 0 & B^* \end{bmatrix} P \circ Q \begin{bmatrix} A^* & 0 \\ 0 & B \end{bmatrix} &= \begin{bmatrix} AA^* & A( X \circ Y) B \\ B^* ( X^* \circ Y^* ) A^* & B^*B	\end{bmatrix}	\\
			&=	\begin{bmatrix}	AA^* & U \\ U^* & B^*B	\end{bmatrix},
	\end{align*}
which is in $M_{2n} (\cl{S} \otimes_{s} \cl{T} )^+ = M_{2n}( \cl{S} \otimes_{\max} \cl{T})^+$. 

On the other hand, since $A$ and $B$ are scalar contractions, $I - A A^*$ and $I - B^* B$ are positive in $M_n$. Thus, the operator matrix $\begin{bmatrix}	I - AA^* & 0 \\ 0 & I - B^*B 	\end{bmatrix}$ is positive in $M_{2n}( \cl{S} \otimes_{\max} \cl{T} )$. By adding the two matrices, we obtain  $\begin{bmatrix} I & U\\U^* & I \end{bmatrix} \in M_{2n}(\cl{S} \otimes_{\max} \cl{T})^+$ which implies that $||U||_{\text{osy-max}} \leq 1.$
\end{proof}


\section{Factorization Through The Matrix Algebras $M_n$}
We now turn to study the maximal tensor product using factorization. Recall that every $u = \sum_{i=1}^n x_i \otimes y_i \in \cl{S} \otimes \cl{T}$ maybe regarded as a linear map $\hat{u} \colon \cl{S}^d \to \cl{T}$, $\hat{u} (f) = \sum_{i=1}^n f(x_i)y_i$, where $\cl{S}^d$ is the linear dual of $\cl{S}$. The map $\hat{u}$ is independent of representation of $u$ and $u \mapsto \hat{u}$ is a one-to-one correspondence between $\cl{S} \otimes \cl{T}$ and $L(\cl{S}^d, \cl{T})$, where the latter is the space of linear maps from the linear dual $\cl{S}^d$ to $\cl{T}$. 

In this section, we use the duality results from \cite{FP}. Henceforth, to ensure $\cl{S}^d$ is an operator system, we assume $\cl{S}$ and $\cl{T}$ to be finite dimensional. Fix a basis $\{ y_1 = 1_{\cl{T}}, \dots, y_m \}$ for $\cl{T}$, where $y_i = y_i^*$ and $|| y_i || = 1$, so that every $ u \in \cl{S} \otimes \cl{T}$ has a unique representation $u= \sum_{i=1}^m x_i \otimes y_i$, for some $x_i \in \cl{S}$. To obtain the main result in this section, we introduce a temporary norm on $\cl{S} \otimes \cl{T}$ by setting $||| u ||| = \sum_{i=1}^m || x_i ||$.

\begin{lemma}
If $u = \sum_{i=1}^m x_i \otimes y_i \in \cl{S} \otimes \cl{T}$, where $x_i = x_i^*$, then $||| u ||| (1_{\cl{S}} \otimes 1_{\cl{T}}) + u \in \cl{D}_1^{\max}( \cl{S}, \cl{T} )$. 
\end{lemma}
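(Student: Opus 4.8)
The plan is to reduce the claim to a single-index statement and then sum. Since $\cl{D}_1^{\max}$ is a cone, it is closed under addition and nonnegative scaling; hence it suffices to show that for each $i$ the element $\|x_i\|\,(1_{\cl{S}} \otimes 1_{\cl{T}}) + x_i \otimes y_i$ lies in $\cl{D}_1^{\max}$, because summing these over $i = 1, \dots, m$ reproduces exactly $|||u|||\,(1_{\cl{S}} \otimes 1_{\cl{T}}) + u$.

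First I would produce positive $2 \times 2$ amplifications of $x_i$ and $y_i$. Using that $x_i = x_i^*$ forces $\|x_i\| 1_{\cl{S}} \pm x_i \in \cl{S}^+$, and that $y_i = y_i^*$ with $\|y_i\| = 1$ forces $1_{\cl{T}} \pm y_i \in \cl{T}^+$, I would set
\[
P_i = \begin{bmatrix} \|x_i\| 1_{\cl{S}} & x_i \\ x_i & \|x_i\| 1_{\cl{S}} \end{bmatrix} \in M_2(\cl{S}), \qquad Q_i = \begin{bmatrix} 1_{\cl{T}} & y_i \\ y_i & 1_{\cl{T}} \end{bmatrix} \in M_2(\cl{T}).
\]
Conjugating each by the scalar unitary $\tfrac{1}{\sqrt 2}\left[\begin{smallmatrix} 1 & 1 \\ 1 & -1\end{smallmatrix}\right]$ diagonalizes them into $(\|x_i\|1_{\cl{S}} + x_i) \oplus (\|x_i\|1_{\cl{S}} - x_i)$ and $(1_{\cl{T}} + y_i)\oplus(1_{\cl{T}} - y_i)$ respectively, so both $P_i \in M_2(\cl{S})^+$ and $Q_i \in M_2(\cl{T})^+$.

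Next I would extract the desired element by a rank-one compression of the Kronecker product $P_i \otimes Q_i \in M_4(\cl{S} \otimes \cl{T})$. Indexing the four tensor coordinates by pairs $(s,t)$, the compression by $A_i = \tfrac{1}{\sqrt 2}\,[\,1\ 0\ 0\ 1\,] \in M_{1,4}$, which selects the $(1,1)$ and $(2,2)$ slots, picks out the index-diagonal terms and yields
\[
A_i (P_i \otimes Q_i) A_i^* = \tfrac{1}{2}\bigl(2\|x_i\|\,1_{\cl{S}} \otimes 1_{\cl{T}} + 2\, x_i \otimes y_i\bigr) = \|x_i\|\,(1_{\cl{S}} \otimes 1_{\cl{T}}) + x_i \otimes y_i.
\]
Since $P_i \in M_2(\cl{S})^+$, $Q_i \in M_2(\cl{T})^+$, and $A_i \in M_{1,4}$, the left-hand side is by definition an element of $\cl{D}_1^{\max}$.

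Summing over $i$ then finishes the argument. Alternatively, invoking Theorem 9 one may bypass the Kronecker bookkeeping: $P_i \circ Q_i$ has diagonal entries $\|x_i\|\,1_{\cl{S}}\otimes 1_{\cl{T}}$ and off-diagonal entries $x_i \otimes y_i$, lies in $\cl{C}_2^s$, and its compression by $\tfrac{1}{\sqrt 2}[\,1\ 1\,]$ gives the same element of $\cl{C}_1^s = \cl{D}_1^{\max}$. I expect the only genuinely delicate point to be the index bookkeeping in the Kronecker compression, namely verifying that $A_i$ isolates precisely the four terms $(P_i)_{11}\otimes(Q_i)_{11}$, $(P_i)_{12}\otimes(Q_i)_{12}$, $(P_i)_{21}\otimes(Q_i)_{21}$, $(P_i)_{22}\otimes(Q_i)_{22}$ and nothing else; the positivity of $P_i$ and $Q_i$ and the closure of $\cl{D}_1^{\max}$ under sums are routine.
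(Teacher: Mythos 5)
Your proof is correct and takes essentially the same route as the paper: the paper forms the same $2\times 2$ positive matrices $P_i$ and $Q_i$, takes their Schur tensor product (which, by its Lemma on $X \circ Y = \cl{E}(X\otimes Y)\cl{E}^*$, is precisely your Kronecker compression in disguise, since your $A_i = \tfrac{1}{\sqrt 2}\,[\,1\ 0\ 0\ 1\,]$ equals $\tfrac{1}{\sqrt 2}[\,1\ 1\,]\cl{E}$), compresses by $[\,1\ 1\,]$, and sums over $i$ inside the cone exactly as you do. If anything you are slightly more careful than the paper, which omits the harmless normalization $\tfrac{1}{\sqrt 2}$ (its compression yields twice the desired element, absorbed by the cone) and does not spell out the unitary-diagonalization argument for the positivity of $P_i$ and $Q_i$.
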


\begin{proof}
Because
	\begin{equation*}
		\begin{bmatrix}
			|| s_i || 1	&		s_i	\\ s_i 	& || s_i || 1
		\end{bmatrix} \in M_2( \cl{S} )^+,	\qquad 
		\begin{bmatrix}
				1		&		t_i 	\\	t_i 	&		1
		\end{bmatrix}	\in M_2( \cl{T} )^+,
	\end{equation*}	
when we form their schur tensor product, we obtain
	\begin{equation*}
		\begin{bmatrix}
			|| s_i || 1 \otimes 1	& 	s_i \otimes t_i 	\\	s_i \otimes t_i & || s_i || 1 \otimes 1
		\end{bmatrix}	\in \cl{D}_2^{\max}( \cl{S}, \cl{T} )^+.
	\end{equation*}
Pre-and-post multiply this matrix by $[1, 1]$ shows that  $|| s_i || (1 \otimes 1) + s_i \otimes t_i \in \cl{D}_1(\cl{S}, \cl{T} )$ for each $i$, thus the sum $||| u ||| ( 1 \otimes 1) + u \in \cl{D}_1^{\max}( \cl{S}, \cl{T} )$. 
\end{proof}

\begin{lemma}
Let $u_{\lambda}$ be a net in $\cl{S} \otimes \cl{T}$. Then $||| u_{\lambda} ||| \to 0$ in $\cl{S} \otimes \cl{T}$ if and only if for each $f \in \cl{S}^d$, $|| \hat{u_{\lambda}} (f) ||_{\cl{T}} \to 0$.
\end{lemma}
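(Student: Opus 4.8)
The plan is to reduce both convergence conditions to coordinatewise statements about the scalar functions $f(x_i^{\lambda})$ and then invoke the equivalence of all linear topologies on the finite-dimensional spaces $\cl{S}$ and $\cl{T}$. First I would expand each $u_{\lambda}$ against the fixed basis, writing $u_{\lambda} = \sum_{i=1}^m x_i^{\lambda} \otimes y_i$ with $x_i^{\lambda} \in \cl{S}$. Since $||| u_{\lambda} ||| = \sum_{i=1}^m || x_i^{\lambda} ||$ is a finite sum of nonnegative terms, $||| u_{\lambda} ||| \to 0$ holds exactly when $|| x_i^{\lambda} || \to 0$ for every $i$. On the other side, $\hat{u_{\lambda}}(f) = \sum_{i=1}^m f(x_i^{\lambda}) y_i$, and because $\{ y_i \}$ is a basis of the finite-dimensional space $\cl{T}$ the associated coordinate functionals are continuous; hence $|| \hat{u_{\lambda}}(f) ||_{\cl{T}} \to 0$ is equivalent to $f(x_i^{\lambda}) \to 0$ for each $i$.

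For the forward implication I would simply use the estimate $| f(x_i^{\lambda}) | \leq || f || \, || x_i^{\lambda} ||$: if every $|| x_i^{\lambda} || \to 0$, then each coordinate $f(x_i^{\lambda}) \to 0$ for every fixed $f \in \cl{S}^d$, so $|| \hat{u_{\lambda}}(f) ||_{\cl{T}} \to 0$ by the coordinatewise description above. For the converse, the hypothesis supplies $f(x_i^{\lambda}) \to 0$ for every $f \in \cl{S}^d$ and every $i$; that is, each net $(x_i^{\lambda})_{\lambda}$ converges to $0$ in the weak topology $\sigma(\cl{S}, \cl{S}^d)$. Here I would appeal to the fact that $\cl{S}^d$ is the full linear dual of the finite-dimensional space $\cl{S}$, so this weak topology coincides with the norm topology and $|| x_i^{\lambda} || \to 0$ follows, whence $||| u_{\lambda} ||| \to 0$.

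The only step requiring genuine care is the converse, where one passes from weak convergence of each coordinate net to norm convergence. This is precisely where the standing finite-dimensionality assumption of the section is essential: it guarantees both that the basis coordinate maps on $\cl{T}$ are continuous (so that $|| \cdot ||_{\cl{T}}$-convergence is detected coordinatewise) and that the weak and norm topologies on $\cl{S}$ agree. I expect this to be the main, though entirely standard, obstacle; everything else is bookkeeping with the fixed representation $u_{\lambda} = \sum_{i=1}^m x_i^{\lambda} \otimes y_i$.
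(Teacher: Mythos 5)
Your proof is correct and follows essentially the same route as the paper's: the forward direction via the estimate $|f(x_i^{\lambda})| \leq \|f\|\,\|x_i^{\lambda}\|$, and the converse by passing from coordinatewise weak convergence of the $x_i^{\lambda}$ to norm convergence using finite-dimensionality of $\cl{S}$. Your appeal to continuity of the coordinate functionals on $\cl{T}$ is just a repackaging of the paper's explicit auxiliary norm $\alpha(t) = \sum_i |c_i|$ and its equivalence with $\|\cdot\|_{\cl{T}}$, so the two arguments coincide in substance.
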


\begin{proof}
Since every $u_{\lambda}$ has a unique representation $u_{\lambda} = \sum_{i=1}^m x_i^{\lambda} \otimes y_i$, $||| u_{\lambda} ||| \to 0$ implies that $\lim_{\lambda} || x_i^{\lambda} || \to 0$ for each $i \in \{ 1, \dots m \}$, which is equivalent to require that $x_i^{\lambda} \to 0$ in the weak topology. Thus for each $f \in \cl{S}^d$,
	\begin{align*}
		|| \hat{u_{\lambda}} (f) ||_{\cl{T}} \leq \sum_{i=1}^m | f (x_i^{\lambda} ) | \cdot || y_i ||_{\cl{T}}	\to 0.
	\end{align*}

Conversely, it suffices to show that for each $i \in \{1, \dots, m\}$, $\lim_{\lambda} || x_i^{\lambda} || = 0$. Note that for $t = \sum_{i=1}^m c_i y_i \in \cl{T}$, $\alpha(t) := \sum_{i=1}^m | c_i |$ defines a norm on $\cl{T}$. Since $\cl{T}$ is finite dimensional, $|| t ||_{\cl{T}} \leq \alpha(t) \leq K || t ||_{\cl{T}}$ for some $K > 0$. For each $f \in \cl{S}^d$, taking $c_i = f(x_i^{\lambda})$ shows that
	\begin{equation*}
		\sum_{i=1}^m | f(x_i^{\lambda}) | \leq K || \hat{u}_{\lambda} (f) ||_{\cl{T}} \to 0.
	\end{equation*}
Hence for each $f \in \cl{S}^d$ and $i \in \{1, \dots, n \}$, $| f(x_i^{\lambda}) | \to 0$. The latter condition is equivalent to $(x_i^{\lambda}) \to 0$ in the weak topology, which coincides with the norm topology because $\cl{S}$ is finite dimensional. 
\end{proof}

\begin{defn}
A linear map $\theta \colon \cl{S} \to \cl{T}$ factors through $M_n$ approximately, provided there exists nets of completely positive maps $\phi_{\lambda} \colon \cl{S} \to M_{n_{\lambda}}$ and $\psi_{\lambda} \colon M_{n_{\lambda}} \to \cl{T}$ such that $\psi_{\lambda} \circ \phi_{\lambda}$ converges to $\theta$ in the point-norm topology. An operator system $\cl{S}$ is said to have complete positive approximation property (CPAP) if the identity map factors through $M_n$ approximately.
\end{defn}

In \cite{HP} it is shown that $\cl{S}$ is $(\min, \max)$-nuclear if and only if $\cl{S}$ has CPFP. We now establish the main theorem in the section. 

\begin{thm}\label{CharMax}
Let $\cl{S}$ and $\cl{T}$ be finite dimensional operator systems and $u \in (\cl{S} \otimes_{\max} \cl{T})^+$. The following are equivalent:
	\begin{enumerate}
		\item 	$u$ is positive in $\cl{S} \otimes_{\max} \cl{T}$.
		\item 	The map $\hat{u} \colon \cl{S}^d \to \cl{T}$ factors through $M_n$ approximately:
			\begin{equation*}
					\xymatrix{
						\cl{S}^d 	\ar[rr]^{\hat{u}}	\ar[rd]_{\varphi_{\lambda}}	&			&		\cl{T}		\\
										&		M_{n_{\lambda}}		\ar[ru]_{\psi_{\lambda}}		&
						}
			\end{equation*}

	\end{enumerate}
\end{thm}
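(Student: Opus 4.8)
The plan is to set up, for each fixed $k$, a dictionary between completely positive maps factoring through $M_k$ and the generators of $\cl{D}_1^{\max}$ supplied by Lemma 1; the theorem is then the limiting form of this dictionary. Two Choi-type correspondences drive everything. First, since $\cl{S}$ is finite dimensional and reflexive, the duality results of \cite{FP} identify $CP(\cl{S}^d, M_k)$ with $M_k(\cl{S}^{dd})^+ = M_k(\cl{S})^+$: a completely positive $\varphi \colon \cl{S}^d \to M_k$ is exactly a map $\varphi(f) = [f(p_{ij})]$ for a unique $[p_{ij}] \in M_k(\cl{S})^+$. Second, the usual Choi correspondence identifies $CP(M_k, \cl{T})$ with $M_k(\cl{T})^+$: a completely positive $\psi \colon M_k \to \cl{T}$ is exactly $\psi([a_{ij}]) = \sum_{i,j} a_{ij} q_{ij}$ with Choi matrix $[q_{ij}] = [\psi(E_{ij})] \in M_k(\cl{T})^+$. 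Composing, $\psi \circ \varphi \colon \cl{S}^d \to \cl{T}$ sends $f \mapsto \sum_{i,j} f(p_{ij}) q_{ij}$, which is precisely $\widehat{w}$ for $w = \sum_{i,j} p_{ij} \otimes q_{ij}$; by Lemma 1 this $w$ is a typical element of $\cl{D}_1^{\max}$. Thus ``factoring through $M_k$ as completely positive maps'' and ``being a generator of $\cl{D}_1^{\max}$'' are two faces of the same object.

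For (1) $\Rightarrow$ (2), I would invoke the Archimedean description of the positive cone: $u \in (\cl{S} \otimes_{\max} \cl{T})^+$ means $u + \varepsilon(1 \otimes 1) \in \cl{D}_1^{\max}$ for every $\varepsilon > 0$. Fixing $\varepsilon$, Lemma 1 writes $u + \varepsilon(1 \otimes 1) = \sum_{i,j} p_{ij}^{\varepsilon} \otimes q_{ij}^{\varepsilon}$ with $[p_{ij}^{\varepsilon}] \in M_{k_\varepsilon}(\cl{S})^+$ and $[q_{ij}^{\varepsilon}] \in M_{k_\varepsilon}(\cl{T})^+$. Feeding these into the two correspondences produces completely positive $\varphi_\varepsilon \colon \cl{S}^d \to M_{k_\varepsilon}$ and $\psi_\varepsilon \colon M_{k_\varepsilon} \to \cl{T}$ with $\psi_\varepsilon \circ \varphi_\varepsilon = \widehat{u + \varepsilon(1 \otimes 1)} = \hat{u} + \varepsilon\,\widehat{1 \otimes 1}$. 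Since $\varepsilon\,\widehat{1 \otimes 1}(f) \to 0$ for each $f$, the net $(\psi_\varepsilon \circ \varphi_\varepsilon)_{\varepsilon \downarrow 0}$ converges to $\hat{u}$ in the point-norm topology, which is the required approximate factorization.

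For (2) $\Rightarrow$ (1), I would run the dictionary in reverse. Given $\varphi_\lambda \colon \cl{S}^d \to M_{n_\lambda}$ and $\psi_\lambda \colon M_{n_\lambda} \to \cl{T}$ completely positive with $\psi_\lambda \circ \varphi_\lambda \to \hat{u}$ in point-norm, write $\varphi_\lambda(f) = [f(p_{ij}^\lambda)]$ with $[p_{ij}^\lambda] \in M_{n_\lambda}(\cl{S})^+$ and set $[q_{ij}^\lambda] = [\psi_\lambda(E_{ij})] \in M_{n_\lambda}(\cl{T})^+$; then $u_\lambda := \sum_{i,j} p_{ij}^\lambda \otimes q_{ij}^\lambda$ satisfies $\widehat{u_\lambda} = \psi_\lambda \circ \varphi_\lambda$ and lies in $\cl{D}_1^{\max} \subset (\cl{S} \otimes_{\max} \cl{T})^+$ by Lemma 1. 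Now $\widehat{u_\lambda} \to \hat{u}$ in point-norm forces $|||u_\lambda - u||| \to 0$ by Lemma 14, hence $u_\lambda \to u$ in the (unique, finite dimensional) norm topology of $\cl{S} \otimes \cl{T}$; since the Archimedean cone $(\cl{S} \otimes_{\max} \cl{T})^+$ is closed, $u$ is positive. I expect the main obstacle to be the correspondence $CP(\cl{S}^d, M_k) \cong M_k(\cl{S})^+$: it rests on the finite dimensional reflexivity $\cl{S}^{dd} = \cl{S}$ and on the dual operator system structure of $\cl{S}^d$ from \cite{FP}, and the complete positivity of $\varphi$ must be verified through a canonical shuffle identifying $M_k(M_m)$ with $M_m(M_k)$. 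The remaining points — closedness of the positive cone in finite dimensions and the identification $\widehat{u_\lambda} = \psi_\lambda \circ \varphi_\lambda$ — are routine once the dictionary is established.
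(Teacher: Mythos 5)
Your proposal is correct and follows essentially the same route as the paper: for $(1) \Rightarrow (2)$ it uses Lemma 1 together with the two Choi-type correspondences $CP(\cl{S}^d, M_k) \cong M_k(\cl{S})^+$ and $CP(M_k, \cl{T}) \cong M_k(\cl{T})^+$, and for $(2) \Rightarrow (1)$ it builds $u_\lambda \in \cl{D}_1^{\max}$ from the factorization and invokes Lemma 14 to turn point-norm convergence into $|||\cdot|||$-convergence, exactly as the paper does. The only cosmetic difference is the final step, where you appeal to norm-closedness of the Archimedean cone (valid, since the order-unit norm is equivalent to $|||\cdot|||$ in finite dimensions), while the paper reaches the same conclusion explicitly via Lemma 13 by showing $\varepsilon(1 \otimes 1) + (u - w_\lambda) \in \cl{D}_1^{\max}$ and adding $w_\lambda$ to get $\varepsilon(1 \otimes 1) + u \in \cl{D}_1^{\max}$.
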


\begin{proof}
Suppose $u \in (\cl{S} \otimes_{\max} \cl{T})^+$. Then for each $\varepsilon > 0$, $u_{\varepsilon} = \varepsilon (1 \otimes 1) + u$ is in $\cl{D}_1^{max}$.By Lemma 1 it can be written as $u_{\varepsilon} = \sum p_{ij}^{\varepsilon} \otimes q_{ij}^{\varepsilon}$, where $P_{\varepsilon} = [p_{ij}^{\varepsilon} ] \in M_{n_{\varepsilon}} ( \cl{S} )^+$ and $Q_{\varepsilon} = [q_{ij}^{\varepsilon}] \in M_{n_{\varepsilon}} ( \cl{T} )^+$. Define $\varphi_{\varepsilon} \colon \cl{S}^d \to M_{n_{\varepsilon}}$ by $\varphi_{\varepsilon}(f) = [ f ( p_{ij}^{\varepsilon} ) ]$ and $\psi_{\varepsilon} \colon M_{n_{\varepsilon}} \to \cl{T}$ by $\psi_{\varepsilon} ( [a_{ij}] ) = \sum a_{ij} q_{ij}^{\varepsilon}$. 
Note that $\varphi_{\varepsilon}$ is completely positive by definition of $\cl{S}^d$. For $\psi_{\varepsilon}$, first consider  the completely positive map $[a_{ij}] \mapsto [a_{ij} ] \otimes Q_{\varepsilon}$. We regard $[a_{ij} ] \otimes Q_{\varepsilon}$ as the matrix $[ q_{ij}^{\varepsilon} [ a_{kl} ] ]_{i,j}^{n_{\varepsilon}}$ and pre-and-post multiply it by $[E_{11}, E_{12}, \dots, E_{1 n_{\varepsilon} } ]$, then we obtain the matrix $[ q_{ij}^{\varepsilon} a_{ij} ]_{i, j=1}^{n_\varepsilon} \in M_{n_{\varepsilon}}( \cl{T} )^+$. Now pre-and-post multiply it by the row vector of length $n_{\varepsilon}$ whose entries are $1$; this yields $\sum_{i,j} a_{ij} q_{ij}^{\varepsilon}$, and $\psi_{\varepsilon}$ is completely positive.  It follows that $\hat{u_{\varepsilon}} = \psi_{\varepsilon} \circ \varphi_{\varepsilon}$ and it converges to $\hat{u}$ as $\varepsilon \to 0$ in the point-norm topology.

Conversely,  every $\psi_{\lambda} \circ \varphi_{\lambda}$ corresponds to a $w_{\lambda} \in \cl{S} \otimes \cl{T}$ so that $\hat{w_{\lambda}} = \psi_{\lambda} \circ \varphi_{\lambda}$. Identifying $\varphi_{\lambda} $ to $P_{\lambda} = [p_{ij}^{\lambda}] \in M_{n_{\lambda}}( \cl{S} )^+$ and $\psi_{\lambda}$ to $Q_{\lambda} = [q_{ij}^{\lambda}] \in M_{n_{\lambda}}( \cl{T})^+$ shows that $w_{\lambda} = \sum_{i,j}^{n_{\lambda}} p_{ij}^{\lambda} \otimes q_{ij}^{\lambda} \in \cl{D}_1^{\max}( \cl{S} \otimes \cl{T} )$. By the point-norm convergence and the last lemma, $\lim_{\lambda} ||| u - w_{\lambda} ||| \to 0$. Now for each $\lambda$, take $\varepsilon_{\lambda} = ||| u - w_{\lambda} |||$, and Lemma 13 asserts that $ \varepsilon_{\lambda} (1 \otimes 1) + (u - w_{\lambda}) \in \cl{D}_1^{\max}( \cl{S}, \cl{T} )$. For each $\varepsilon > 0$ there exists a $\lambda$, so that $\varepsilon_{\lambda} < \varepsilon$ so $ \varepsilon (1 \otimes 1) + (u - w_{\lambda}) \in \cl{D}_1^{\max}( \cl{S}, \cl{T} )$.  Hence $\varepsilon 1 \otimes 1 + u \in \cl{D}_1^{\max} (\cl{S}, \cl{T} )$ and $u \in (\cl{S} \otimes_{\max} \cl{T})^+$. 
\end{proof}

By the identification $M_m( \cl{S} \otimes_{\max} \cl{T} ) \cong \cl{S} \otimes_{\max} M_m( \cl{T} )$, we establish the following characterization of the matricial cone structure of the maximal tensor product. 

\begin{thm}
An element $U \in  M_m( \cl{S} \otimes_{\max} \cl{T} )$ is positive if and only if $\hat{U} \colon \cl{S}^d \to M_m(\cl{T})$ factors through $M_n$ approximately. 
\end{thm}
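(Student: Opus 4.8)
The plan is to reduce the matricial statement (Theorem 16) to the scalar-level statement already proved in Theorem \ref{CharMax} by exploiting the identification $M_m(\cl{S} \otimes_{\max} \cl{T}) \cong \cl{S} \otimes_{\max} M_m(\cl{T})$. The point is that this identification is a unital complete order isomorphism, so $U \in M_m(\cl{S} \otimes_{\max} \cl{T})^+$ corresponds to a genuine positive element $u \in (\cl{S} \otimes_{\max} M_m(\cl{T}))^+$. Since $M_m(\cl{T})$ is again a finite dimensional operator system whenever $\cl{T}$ is, Theorem \ref{CharMax} applies verbatim with $\cl{T}$ replaced by $M_m(\cl{T})$: positivity of $u$ is equivalent to $\hat{u} \colon \cl{S}^d \to M_m(\cl{T})$ factoring approximately through some $M_{n_\lambda}$.

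First I would make the identification precise and check that it is a unital complete order isomorphism; this is the content of the shuffle isomorphism and associativity/symmetry of $\otimes_{\max}$ recorded in \cite{KPTT1}, so I would cite it rather than reprove it. Second, I would verify that the linear map $\hat{U} \colon \cl{S}^d \to M_m(\cl{T})$ associated to $U$ under the tensor correspondence $\cl{S} \otimes \cl{T} \leftrightarrow L(\cl{S}^d, \cl{T})$ coincides, under the identification $M_m(\cl{S} \otimes \cl{T}) \cong \cl{S} \otimes M_m(\cl{T})$, with the map $\hat{u}$ attached to $u \in \cl{S} \otimes M_m(\cl{T})$. Concretely, if $U = [u_{kl}]$ with $u_{kl} \in \cl{S} \otimes \cl{T}$, then under the shuffle the corresponding $u \in \cl{S} \otimes M_m(\cl{T})$ is the element whose $\hat{u}(f)$ is the matrix $[\hat{u}_{kl}(f)]_{k,l} \in M_m(\cl{T})$; this is exactly $\hat{U}(f)$, so the two maps agree. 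Third, with both identifications in hand, I would simply invoke Theorem \ref{CharMax}: $U$ is positive iff $u$ is positive iff $\hat{u} = \hat{U}$ factors approximately through matrix algebras.

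The only genuine obstacle is the bookkeeping in the second step, namely tracking how the canonical shuffle $M_m(\cl{S} \otimes \cl{T}) \cong \cl{S} \otimes M_m(\cl{T})$ interacts with the tensor-to-linear-map correspondence $u \mapsto \hat{u}$. One must confirm that it intertwines the complete order isomorphism of the first step with the literal identification of the target $M_m(\cl{T})$, so that ``factors approximately through $M_{n}$'' means the same thing on both sides; because all maps involved are linear and the identifications are entrywise, this amounts to a routine check that the matrix entries match, and no analytic difficulty arises. Everything else, including the point-norm approximation and the use of $M_m(\cl{T})$ being a finite dimensional operator system, is inherited directly from Theorem \ref{CharMax}.
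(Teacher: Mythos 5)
Your proposal is correct and takes essentially the same route as the paper: the paper obtains this theorem directly from Theorem \ref{CharMax} via the identification $M_m(\cl{S} \otimes_{\max} \cl{T}) \cong \cl{S} \otimes_{\max} M_m(\cl{T})$, offering no further argument. Your explicit verification that the canonical shuffle intertwines $U \mapsto \hat{U}$ with $u \mapsto \hat{u}$ merely fills in bookkeeping the paper leaves implicit.
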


We would like to remark that this result is rather interesting. In \cite{FP} we have $\cl{S} \otimes_{\min} \cl{T} = (\cl{S}^d \otimes_{\max} \cl{T}^d )^d$. Combining with the result after Theorem 1, we deduce that $(\cl{S} \otimes_{\min} \cl{T})^+ = CP(\cl{S}^d, \cl{T})$; whereas by Theorem \ref{CharMax}, $(\cl{S} \otimes_{\max} \cl{T})^+$ corresponds to a proper subcone of $CP(\cl{S}^d, \cl{T})$ whose elements factor through $M_n$ approximately.  Since the minimal and maximal tensor products each represents respectively the largest and smallest matricial cone structure one can equip on $\cl{S} \otimes \cl{T}$, it brings up the natural question about the corresponding subsets with respect to other tensor products in \cite{KPTT1}.

Symmetry and projectivity of the maximal tensor product can also be obtained by this diagram. 

\begin{prop}
The maximal tensor product is symmetric and projective.
\end{prop}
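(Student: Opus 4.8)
The plan is to derive both properties from the factorization characterization of the maximal cones (the matricial theorem preceding the remark, together with Theorem \ref{CharMax}), transported to the dual side. Two inputs from \cite{FP} drive the argument: first, for finite-dimensional $\cl{S}$ and $\cl{T}$ the correspondence $u\mapsto\hat u$ is natural, so that both $\theta$ on $\cl{S}\otimes\cl{T}$ and precomposition by a dual map correspond to concrete operations on $L(\cl{S}^d,\cl{T})$; second, $q\colon\cl{S}\to\cl{R}$ is a complete quotient map if and only if its adjoint $q^d\colon\cl{R}^d\to\cl{S}^d$ is a complete order embedding. Throughout I work in the finite-dimensional setting of this section, where these tools apply.

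For symmetry I would show that $\theta\colon x\otimes y\mapsto y\otimes x$ preserves positivity in both directions by transposing factorizations. If $u=\sum p_{ij}\otimes q_{ij}$ comes from $P=[p_{ij}]\in M_k(\cl{S})^+$ and $Q=[q_{ij}]\in M_k(\cl{T})^+$, then $\hat u$ factors as $\cl{S}^d\xrightarrow{\varphi}M_k\xrightarrow{\psi}\cl{T}$ with $\varphi(f)=[f(p_{ij})]$ and $\psi([a_{ij}])=\sum a_{ij}q_{ij}$, exactly as in the proof of Theorem \ref{CharMax}. The image $\theta(u)=\sum q_{ij}\otimes p_{ij}$ has $\widehat{\theta(u)}$ factoring as $\cl{T}^d\xrightarrow{\varphi'}M_k\xrightarrow{\psi'}\cl{S}$ with $\varphi'(g)=[g(q_{ij})]$ and $\psi'([b_{ij}])=\sum b_{ij}p_{ij}$, where $\varphi'$ is completely positive because $Q\in M_k(\cl{T})^+$ and $\psi'$ because $P\in M_k(\cl{S})^+$. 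Thus an (approximate) factorization of $\hat u$ produces one of the same size for $\widehat{\theta(u)}$, and the characterization gives $u\in M_m(\cl{S}\otimes_{\max}\cl{T})^+$ iff $\theta(u)\in M_m(\cl{T}\otimes_{\max}\cl{S})^+$; since $\theta$ is a unital linear isomorphism, it is a complete order isomorphism.

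For projectivity it suffices to verify the equivalent $\varepsilon$-lifting condition for left projectivity. Fix $n$, $u\in M_n(\cl{R}\otimes_{\max}\cl{T})^+$, and $\varepsilon>0$. By the very definition of the Archimedeanization, $u+\varepsilon(I_n\otimes 1_\cl{R}\otimes 1_\cl{T})$ lies in $\cl{D}_n^{\max}(\cl{R},\cl{T})$, so under the identification $M_n(\cl{R}\otimes_{\max}\cl{T})\cong\cl{R}\otimes_{\max}M_n(\cl{T})$ its associated map factors \emph{exactly} through a matrix algebra, $\cl{R}^d\xrightarrow{\varphi}M_k\xrightarrow{\psi}M_n(\cl{T})$ with $\varphi,\psi$ completely positive. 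Because $q^d\colon\cl{R}^d\to\cl{S}^d$ is a complete order embedding, the completely positive map $\varphi\circ(q^d)^{-1}$ on the operator subsystem $q^d(\cl{R}^d)\subseteq\cl{S}^d$ extends, by Arveson's extension theorem and injectivity of $M_k$ \cite{Pa}, to a completely positive $\tilde\varphi\colon\cl{S}^d\to M_k$ with $\tilde\varphi\circ q^d=\varphi$. Then $\psi\circ\tilde\varphi\colon\cl{S}^d\to M_n(\cl{T})$ factors exactly through $M_k$, so it is the map associated to some $\tilde u_\varepsilon\in\cl{D}_n^{\max}(\cl{S},\cl{T})\subseteq M_n(\cl{S}\otimes_{\max}\cl{T})^+$. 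Since precomposition by $q^d$ implements $q\otimes id$ under $u\mapsto\hat u$, the map associated to $(q\otimes id)(\tilde u_\varepsilon)$ is $\psi\circ\tilde\varphi\circ q^d=\psi\circ\varphi$, giving $(q\otimes id)(\tilde u_\varepsilon)=u+\varepsilon(I_n\otimes 1_\cl{R}\otimes 1_\cl{T})$ on the nose. Right projectivity then follows from the symmetry just established.

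The crux is obtaining an \emph{exact} lift rather than a limit. The clean device is to perturb first and work inside $\cl{D}_n^{\max}$, where the associated map factors through a single $M_k$, and only then extend via Arveson's theorem, which produces equality outright. The two facts that make this legitimate—that $u+\varepsilon(\text{unit})\in\cl{D}_n^{\max}$ for every $\varepsilon>0$, and that precomposition by $q^d$ realizes $q\otimes id$—should be recorded carefully, as should the translation of $I_n\otimes 1_\cl{R}\otimes 1_\cl{T}$ through $M_n(\cl{R}\otimes_{\max}\cl{T})\cong\cl{R}\otimes_{\max}M_n(\cl{T})$. I would also flag that the whole argument lives in the finite-dimensional regime of this section, consistent with the standing hypotheses.
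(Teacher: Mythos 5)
Your proof is correct, and for projectivity it takes a genuinely different route from the paper. The paper lifts a positive $u$ \emph{exactly}: it extends each approximating map $\varphi_{\lambda}$ of $\hat{u}$ through Arveson's theorem to $\Phi_{\lambda}\colon \cl{S}^d\to M_{n_{\lambda}}$, obtains a net of positive lifts $w_{\lambda}$, and then runs a compactness argument (boundedness of $\|\hat{w_{\lambda}}(\delta_0)\|$, equivalence of norms in finite dimensions, extraction of a convergent subnet) to produce a single positive $w$ with $\hat{w}\circ q^d=\hat{u}$, from which the $\varepsilon$-criterion follows by unitality of $q$. You instead perturb first: since $u+\varepsilon(\text{unit})\in\cl{D}_n^{\max}(\cl{R},\cl{T})$ by the definition of the Archimedeanization, its associated map factors \emph{exactly} through a single $M_k$, one application of Arveson (legitimate since $q^d$ is a complete order embedding and $M_k$ is injective) gives $\tilde\varphi$ with $\tilde\varphi\circ q^d=\varphi$, and the identity $\widehat{(q\otimes id)(w)}=\hat{w}\circ q^d$ yields the $\varepsilon$-lift on the nose, with no net, no subnet, and no norm-equivalence analysis. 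What each buys: the paper's argument proves the stronger statement that every positive element of $\cl{R}\otimes_{\max}\cl{T}$ lifts exactly to a positive element (not just up to $\varepsilon$), at the cost of the compactness machinery; yours verifies precisely the stated $\varepsilon$-formulation of left projectivity with strictly less analysis. Your symmetry argument is essentially the paper's (the paper dualizes the factorization diagram, you transpose the matrices $P,Q$ implementing it — the same operation in different clothing), with the small point, which you handle correctly, that point-norm convergence transports across $\theta$ because all spaces are finite dimensional. Your one unproved identification — that $\cl{D}_n^{\max}(\cl{R},\cl{T})$ corresponds to $\cl{D}_1^{\max}(\cl{R},M_n(\cl{T}))$ under the canonical shuffle, i.e.\ that the identification holds at the pre-Archimedeanized cone level — is true and routine (compress $P\otimes\tilde{Q}$ by a suitable scalar matrix), but since the paper only invokes the identification at the level of $M_n(\cl{R}\otimes_{\max}\cl{T})\cong\cl{R}\otimes_{\max}M_n(\cl{T})$, you should record the $\cl{D}$-level verification explicitly if you write this up.
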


\begin{proof}
Let $u = \sum s_i \otimes t_i \in \cl{S} \otimes_{\max} \cl{T}$. By dualizing the diagram in Theorem \ref{CharMax}, one sees that 
	\begin{equation*}
		\xymatrix{
			\cl{T}^d		\ar[dr]_{\psi_{\lambda}^d} \ar[rr]^{ (\hat{u})^d}	&			&		\cl{S}^{dd} = \cl{S}	\\
						&		M_{n_{\lambda}}^d = M_{n_{\lambda}} \ar[ru]_{\varphi_{\lambda}^d}			&
		}
	\end{equation*}
where $(\hat{u})^d$ is the map $g \mapsto \sum g(t_i) s_i$. Consequently, $\sum t_i \otimes s_i \in (\cl{T} \otimes_{\max} \cl{S})^+$ if and only if the above diagram holds, which by duality is equivalent to Theorem \ref{CharMax} (2). This shows that $\cl{S} \otimes_{\max} \cl{T} \cong_{\max} \cl{T} \otimes \cl{S}$ at the ground level. At each matrix level $n$, identifying $M_n(\cl{S} \otimes_{\max} \cl{T} ) = \cl{S} \otimes_{\max} M_n(\cl{T} )$ and replacing $\cl{T}$ by $M_n(\cl{T})$ proves symmetry of the maximal tensor product. 

For projectivity, first consider a complete quotient map $q \colon \cl{S} \to \cl{R}$. We claim that every $u \in (\cl{R} \otimes_{\max} \cl{T} )^+$ can be lifted to some $w \in ( \cl{S} \otimes_{\max} \cl{T} )^+$. Indeed, by Theorem \ref{CharMax} there are $\varphi_{\lambda}$ and $\psi_{\lambda}$ such that $ \psi_{\lambda} \circ \varphi_{\lambda} $ converges to $u$ in the point-norm topology. Since $q^d \colon \cl{R}^d \to \cl{S}^d$ is a complete order inclusion, by the Arveson extension theorem, there is a completely positive $\Phi_{\lambda} \colon \cl{S}^d \to M_{n_{\lambda}}$ extending $\varphi_{\lambda}$. Hence, the following diagram commutes:
			\begin{equation*}
					\xymatrix{
						\cl{R}^d 	\ar[rrr]^{\hat{u}}	\ar[d]_{q^d}	\ar[rrd]_{\varphi_{\lambda}}	&		&			&		\cl{T}		\\
						\cl{S}^d	\ar@{-->}[rr]_{\Phi_{\lambda}}		&			& 	M_{n_{\lambda}}		\ar[ru]_{\psi_{\lambda}}		&
						}
			\end{equation*}
Let $[ s_{ij}^{\lambda} ] \in M_{n_{\lambda}} ( \cl{S})^+$ be the corresponding matrix of $\Phi_{\lambda}$ and likewise for $[t_{ij}^{\lambda} ] \in M_{n_{\lambda}}( \cl{T} )^+$ of $\psi_{\lambda}$. Then $w_{\lambda} = \sum_{i,j} s_{ij}^{\lambda} \otimes t_{ij}^{\lambda} \in ( \cl{S} \otimes_{\max} \cl{T} )^+$ by the schur characterization and $\hat{w} = \psi_{\lambda} \circ \Phi_{\lambda}$. To this end, we claim that there is a subnet $w_{\lambda_{\alpha}}$ converging to some positive $w$ such that $\hat{w} \circ q^d = \hat{u}$. 

Let $\delta_0$ denote the unit in $\cl{R}^d \subset_{coi} \cl{S}^d$. Then $|| \hat{w_{\lambda}} (\delta_0) || = || \psi_{\lambda} \circ \varphi_{\lambda} (\delta_0) || \to || \hat{u} (\delta_0 ) ||$ asserts there is $\lambda_0$ such that the set $\{ || \hat{w_{\lambda}} ( \delta_0 )  || \colon \lambda > \lambda_0 \}$ is bounded. However, for completely positive maps, $|| \hat{w_{\lambda}} (\delta_0) || = || \hat{w_{\lambda}} ||_{cb} = || \hat{w_{\lambda} } ||$ and the latter norm also defines a norm on $\cl{S} \otimes \cl{T}$. By the equivalence of norm topologies, $\{ w_{\lambda} \colon \lambda > \lambda_0 \}$ is bounded in $(\cl{S} \otimes_{\max} \cl{T})^+$ and possesses a convergent subnet $w_{\lambda_{\alpha}} \to w \in (\cl{S} \otimes_{\max} \cl{T})^+$. Therefore for each $f \in \cl{R}^d$, 
	\begin{align*}
		|| ( \hat{w} \circ q^d - \hat{u} ) f || &= || ( \lim_{\alpha} \hat{w_{\lambda_{\alpha}} } \circ q^d - \hat{u} )f || = || (\lim_{\alpha} \psi_{\lambda_{\alpha}} \circ  \Phi_{\lambda_{\alpha} } \circ q^d - \hat{u} ) 	f||\\
									&=	|| ( \lim_{\alpha} \psi_{\lambda_{\alpha}} \circ \varphi_{\lambda_{\alpha}} - \hat{u}  ) f ||= \lim_{\alpha} || (\psi_{\lambda_{\alpha}} \circ \varphi_{\lambda_{\alpha}} - \hat{u} ) f ||  \\
									&=	\lim_{\lambda} || ( \psi_{\lambda} \circ \varphi_{\lambda} - \hat{u} ) f || \to 0,
	\end{align*}
where the second line follows from Lemma 14. Consequently every positive $u \in \cl{R} \otimes_{\max} \cl{T}$ can be lifted to a positive $w \in \cl{S} \otimes_{\max} \cl{T}$. This implies that for every such $u$ and for each $\varepsilon > 0$,  the element $w + \varepsilon (1_{\cl{S}} \otimes 1_{\cl{T}} ) \in ( \cl{S} \otimes_{\max} \cl{T} )^+$ satisfies $(q \otimes id)(w + \varepsilon(1_{\cl{S}} \otimes 1_{\cl{T}}) ) = u + \varepsilon (1_{\cl{R}} \otimes 1_\cl{T} )$. 

Finally, again by identifying $M_n( \cl{R} \otimes_{\max} \cl{T})$  to $\cl{R} \otimes_{\max} M_n(\cl{T})$ and likewise for $\cl{S} \otimes_{\max} M_n(\cl{T})$, we prove that the maximal tensor product is left projective. By symmetry, it is right projective and hence projective. 
\end{proof}

At last, we remark that this characterization of the maximal tensor product indeed coincides with the $(\min, \max)$-nuclearity result in \cite{HP, Ka}.  

\begin{cor}
Let $\delta_i$ be the dual basis of $y_i$ for $\cl{T}^d$. Then $u = \sum_{i=1}^m \delta_i \otimes y_i \in \cl{T}^d \otimes_{\max} \cl{T}$ is positive if and only if $\cl{T}$ is (min, max)-nuclear. 
\end{cor}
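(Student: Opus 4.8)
The plan is to identify the map $\hat{u}$ with the identity operator on $\cl{T}$ and then read off the result directly from Theorem \ref{CharMax}, combined with the definition of the CPAP and the $(\min, \max)$-nuclearity characterization of \cite{HP} recalled before Theorem \ref{CharMax}. Since $\cl{T}$ is finite dimensional, its bidual $(\cl{T}^d)^d$ is unitally completely order isomorphic to $\cl{T}$ by the reflexivity results in \cite{FP}, so the hypotheses of Theorem \ref{CharMax} apply with the first operator system taken to be $\cl{T}^d$.

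First I would compute $\hat{u}$. By construction $\hat{u} \colon (\cl{T}^d)^d \to \cl{T}$ is given by $\hat{u}(f) = \sum_{i=1}^m f(\delta_i) y_i$. Under the reflexive identification, an element $t \in \cl{T}$ corresponds to the evaluation functional $\mathrm{ev}_t$ on $\cl{T}^d$, so that $\hat{u}(\mathrm{ev}_t) = \sum_i \mathrm{ev}_t(\delta_i) y_i = \sum_i \delta_i(t) y_i$. Writing $t = \sum_j c_j y_j$ and using that $\{\delta_i\}$ is the basis dual to $\{y_i\}$ gives $\delta_i(t) = c_i$, whence $\hat{u}(\mathrm{ev}_t) = \sum_i c_i y_i = t$. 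Thus $\hat{u}$ is exactly the identity map $\mathrm{id}_{\cl{T}}$.

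With this identification in hand, Theorem \ref{CharMax} says that $u$ is positive in $\cl{T}^d \otimes_{\max} \cl{T}$ if and only if $\hat{u} = \mathrm{id}_{\cl{T}}$ factors through the matrix algebras $M_n$ approximately, i.e. there exist completely positive maps $\varphi_\lambda \colon \cl{T} \to M_{n_\lambda}$ and $\psi_\lambda \colon M_{n_\lambda} \to \cl{T}$ with $\psi_\lambda \circ \varphi_\lambda \to \mathrm{id}_{\cl{T}}$ in the point-norm topology. By Definition of the CPAP this is precisely the assertion that $\cl{T}$ has the CPAP, and by the cited result of \cite{HP} this is in turn equivalent to $\cl{T}$ being $(\min, \max)$-nuclear. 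Chaining these equivalences yields the corollary.

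The only point requiring genuine care is verifying that $\hat{u}$ really is the identity under the reflexivity isomorphism, and in particular that the complete order isomorphism $(\cl{T}^d)^d \cong \cl{T}$ is compatible with the notion of ``factoring through $M_n$ approximately,'' so that the factorization of $\hat{u}$ coincides verbatim with the factorization appearing in the definition of the CPAP for $\cl{T}$. Once this bookkeeping is settled, no further estimates are needed: everything else is a direct application of Theorem \ref{CharMax} together with the nuclearity characterization from \cite{HP}.
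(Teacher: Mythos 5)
Your proposal is correct and follows essentially the same route as the paper's own proof: set $\cl{S} = \cl{T}^d$, identify $\hat{u}$ with $\mathrm{id}_{\cl{T}}$ via reflexivity, and combine Theorem \ref{CharMax} with the CPAP characterization of $(\min,\max)$-nuclearity from \cite{HP}. Your explicit verification that $\hat{u}$ is the identity under the identification $(\cl{T}^d)^d \cong \cl{T}$ is a detail the paper leaves implicit, but the argument is the same.
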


\begin{proof}
Let $\cl{S} = \cl{T}^d$ and note that $\hat{u}$ is the identity map on $\cl{T}$. Moreover, $u \in (\cl{T}^d \otimes_{\max} \cl{T} )^+$ if and only if $\hat{u}$ factors through $M_n$, which by \cite[Theorem 3.2]{HP}, if and only if $\cl{T}$ is (min, max)-nuclear. 
\end{proof}


\section{Acknowledgment}
The author would like to thank Vern I. Paulsen for his valuable advice and inspirations in writing this paper, and thank Prof. Vandana Rajpal for introducing the schur tensor product to the author.  

\providecommand{\bysame}{\leavevmode\hbox to3em{\hrulefill}\thinspace}
\providecommand{\MR}{\relax\ifhmode\unskip\space\fi MR }
\providecommand{\MRhref}[2]{%
  \href{http://www.ams.org/mathscinet-getitem?mr=#1}{#2}
}
\providecommand{\href}[2]{#2}

\end{document}